\documentclass[10pt]{amsart}
\usepackage{amsmath}

\usepackage{graphicx}
\usepackage{hyperref}

\usepackage[sort]{natbib}
\usepackage{setspace}
\usepackage{amsthm}
\usepackage{amssymb}
\usepackage{amsfonts}
\usepackage{mathpazo}
\usepackage{xcolor}
\usepackage[T1]{fontenc}
\usepackage[utf8]{inputenc}

\def \N {{\mathbb N}}
\def \R {{\mathbb R}}
\def \H {{\mathbb H}}
\def \C {{\mathbb C}}
\def \M {{\mathbb M}}

\def \D {{\mathcal D}}
\def \Ul {{\mathcal U}}
\def \Mc {{\mathcal M}}
\def \W {{\mathbb W}}
\def \V {{\mathcal V}}
\def \G {{\mathcal{G}}}

\def \tr {{\operatorname{tr}}}
\def \Re {{\operatorname{Re}}}

\newtheorem{theorem}{Theorem}[section]

\newtheorem{corollary}[theorem]{Corollary}

\newtheorem{remark}[theorem]{Remark}
\newtheorem{pro}[theorem]{Proposition}

\newtheorem{definition}[theorem]{Definition}
\newtheorem{lemma}[theorem]{Lemma}

\title[Properties of best approximations with respect to the Ky Fan $p$-$k$ norm, and the strict spectral approximant of a matrix]{Properties of best approximations with respect to the Ky Fan $p$-$k$ norm, and the strict spectral approximant of a matrix}
\author[ P. Grover, \ K. K. Gupta ]
{Priyanka Grover$^{1}$, Krishna Kumar Gupta$^{2}$ }
\address{{$^{1}$}   Priyanka Grover,
	Department of Mathematics, 
	Shiv Nadar Institution of Eminence Delhi NCR, NH-91, Tehsil Dadri, Uttar Pradesh 201314, India 
}
\email{priyanka.grover@snu.edu.in}

\address{{$^{2}$}   Krishna Kumar Gupta,
	Department of Mathematics, 
	Shiv Nadar Institution of Eminence Delhi NCR, NH-91, Tehsil Dadri, Uttar Pradesh 201314, India 
}
\email{kg952@snu.edu.in, shrikrishna6996@gmail.com}

\subjclass{15A60, 15A18, 58C20, 47A30}
\keywords{Ky Fan $p$-$k$ norms, Subdifferential of norm, Best approximations of matrices, Approximate orthogonality, P\'olya algorithm, Strict Spectral approximant}

\begin{document}

\begin{abstract}
	Some questions raised in [K. Zi\k{e}tak, {\it From the strict Chebyshev approximant of a vector to the strict spectral approximant of a matrix}, Warsaw :  Banach Center Publ., 112  Polish Acad. Sci. Inst. Math.  (2017)] are discussed. To do so, the subdifferential set of the Ky Fan $p$-$k$ norm is computed.
A characterization for the best approximations with respect to the Ky Fan $p$-$k$ norms is given. Further, necessary and sufficient conditions for  $\varepsilon$-Birkhoff orthogonality with respect to the Ky Fan $p$-$k$ norm are also derived.     
\end{abstract}
\maketitle
	\section{Introduction}
  Let $\mathbb{M}_{m\times n}(\C)$ denote the set of $m\times n$ matrices over $\C$ with a given norm $\|\cdot\|.$ Let $\mathcal{M}$ be a subspace of $\M_{m\times n}(\C).$ Let $A\in \M_{m\times n}(\C).$ A matrix $Y\in\Mc$ is called a best approximation of $A$ with respect to $\|\cdot\|$  if
  \begin{equation}\label{kp_approx_def11}
  	\|A-Y\|=\min_{X\in \Mc}\|A-X\|.
  \end{equation} 
  When $Y$ is the zero matrix, we have $\|A\|\leq \|A-X\|$ for all $X\in \Mc$. In this case, it is said that $A$ is \emph{Birkhoff-James orthogonal} to $\Mc$.  If $A$ is a Hermitian matrix and $\Mc$ is the real subspace of diagonal matrices, then $A$ is called a \emph{minimial matrix}. See \cite{andruchow} for applications of minimal matrices to flag manifolds of lower dimensions. More recently, \cite{ying} deals with minimal Hermitian matrices related to certain $C^*$-subalgebras of $\mathbb M_{n \times n}(\mathbb C)$. In \cite{PGmatrixsubspace}, some of these problems have been studied in much more detail by considering Birkhoff-James orthogonality to any real or complex subspace (not just $C^*$-subalgebras) of $\mathbb M_{n \times n}(\mathbb C)$. In \cite{surveyZ}, an alternate proof was given for complex subspaces of $\M_{m\times n}(\C)$. More related results with respect to the various norms can be found in \cite{PriGro_thesis}.
 Let $n_0:=\min\{m,n\},$ and write
  $\boldsymbol{\sigma}(A)=\big(\sigma_1(A),\ldots,\sigma_{n_0}(A)\big)\in\R^{n_0}$ for the vector of singular values of $A,$ listed in nonincreasing order.
 For $1\leq p<\infty,$  the Schatten $p$-norm or the $c_p$ norm of $A$ is 
 \begin{equation}\label{cp_defi}
 	\|A\|_p=\left(\sum\limits_{i=1}^{n_0}\sigma_i^p(A) \right)^\frac{1}{p}.
 	\end{equation}

For $p=1,$ it is called the trace norm of $A.$ The spectral norm of $A$ is 
$\|A\|_\infty=\sigma_1(A).$

 For $1<p<\infty,$ the $c_p$ norm is strictly convex \cite{zietakextremal}. So the best approximation of $A$ in $c_p$ norm is unique. We call this the $c_p$-approximation of $A$ and denote it by $Y_p.$ Since the spectral norm and the trace norm are not strictly convex, spectral approximation and trace approximation may not be unique. To address this, the notion of {\it canonical trace class approximation} was introduced in \cite{Legg_wardtrace_1985}, where it was also shown that $c_p$-approximation of $A$ converges to the canonical trace class approximation of $A$ as $p \to 1.$ Similarly, the concept of {\it strict spectral approximation} was introduced in \cite{SSA}. A matrix $Y^{(st)}\in\Mc$ is called the strict spectral approximation if the vector $\boldsymbol{\sigma}(A-Y^{(st)})$ is minimal with respect to the lexicographic ordering on the set $\{\boldsymbol{\sigma}(A-Y):Y\in\Mc\}.$ Another definition of strict spectral approximation is described in \cite{SSA} as follows. Let $1\leq p<\infty$ and $1\leq k\leq n_0.$ Then, the Ky Fan $p$-$k$ norm of $A$ is defined as:
 \begin{equation}
 	\|A\|_{(p,k)}=\left(\sigma^p_1(A)+\sigma^p_2(A)+\cdots+\sigma^p_k(A)\right) ^\frac{1}{p}. 
 \end{equation} The spectral norm $\|A\|_\infty$ is $\|A\|_{(p,1)}$. For $1 \leq k \leq n_0$ and $p=1$, 
 $\|A\|_{(1,k)} = \sigma_1(A) + \cdots + \sigma_k(A)$ defines the Ky Fan $k$ norm. Let $\Mc_1$ denote the set of spectral approximations of $A$ in $\Mc.$ For $p=2,$ the nested sequence of sets $\Mc_k$ is defined for each $k=1,\ldots,n_0$ by
 $\Mc_k=\{Y\in \Mc_{k-1}:\|A-Y\|_{(2,k)}=\min\limits_{X\in\Mc_{k-1}}\|A-X\|_{(2,k)}\}.$ Thus we get \begin{equation}\label{subset_seq}
 	\Mc\supset\Mc_1\supset\Mc_2\ldots\supset \Mc_{n_0-1}\supset\Mc_{n_0}.
 	\end{equation} Then, the matrix contained in $\Mc_{n_0}$ is the strict spectral approximation $Y^{(st)}.$
 
 In \cite{surveyZ}, it was conjectured that \begin{equation}\label{conjecture}
 Y_p\to Y^{(st)} \quad \text{ as } p\to\infty.
 \end{equation}  
  In the same paper, an attempt to prove the conjecture was made.  However, due to limited information about the best approximations in the Ky Fan $p$-$k$ norm, the proof remains incomplete. Some questions related to the convergence relations about the best approximations with respect to the Ky Fan $p$-$k$ norm were raised in \cite{surveyZ} and it was suggested to find the \emph{subdifferential} of the Ky Fan $p$-$k$ norm. We answer the latter and then explore some properties of best approximations with respect to $\|\cdot\|_{(p,k)}$. The proof of \eqref{conjecture} is given in special cases. 
  
  Let $(\mathcal{X},\|\cdot\|)$ be any normed space and let $f:\mathcal{X}\to\R$ be a continuous convex function. For $a\in\mathcal{X},$ the subdifferential set of $f$ at $a$ is defined as 
  \begin{equation}
  	\partial f(A)=\{\psi\in\mathcal{X}^*:\Re\, \psi(x-a)\leq f(x)-f(a) \text{ for all } x\in \mathcal{X}\}.
  \end{equation}
  For any norm $\|\cdot\|$ on the space $\M_{m\times n}(\C)$, it is well known that
  \begin{equation}
  	\partial \|A\|=\{G\in \M_{m\times n}(\C): \|A\|=\text{Re } \tr(G^* A),\|G\|^*\leq 1\},\label{eq 1.4.1}
  \end{equation}
  where $\|\cdot\|^*$ is the dual norm of $\|\cdot\|$. For a unitarily invariant norm (one that satisfies $|\!|\!|\Ul A \V|\!|\!| = |\!|\!|A|\!|\!|$ for all $A\in\M_{m\times n}(\C) $ and all unitary matrices $\Ul\in\M_{m\times m}(\C)$, $\V\in\M_{n\times n}(\C)$ ), there exists a unique symmetric gauge function $\psi$ on $\mathbb{R}^{n_0}$ such that
  $
  |\!|\!|A|\!|\!| = \psi\big((\sigma_1(A), \ldots, \sigma_{n_0}(A))\big) \text{ for every } A \in \M_{m\times n}(\C).$ 
  In \cite{watson_subdiff_cha}, for the space of real $m\times n$ matrices, an expression of $\partial|\!|\!|\cdot|\!|\!|$ was given in terms of the respective symmetric gauge function. It was shown in \cite{Zietaksubdual1993} that for $A \in \mathbb{M}_{m\times n}(\mathbb{C})$,
  \begin{equation}\label{subd_ziet_gauge1}
  	\begin{aligned}
  	\partial |\!|\!|A|\!|\!| 
  	 & = \Big\{\Ul D \V^* : A = \Ul \operatorname{diag}(\sigma_1(A),\ldots, \sigma_{n_0}(A)) \V^* \text{ is a singular value}\\
  	 & \text{ decomposition of } A,D=\operatorname{diag} 
  	 (d_1, \ldots, d_{n_0})\in\M_{m\times n}(\C),
  	\sum \sigma_i(A)d_i = |\!|\!|A|\!|\!| = \\
  	&\psi(\sigma_1(A), \ldots, \sigma_{n_0}(A)), \psi^*(d_1, \ldots, d_{n_0}) = 1 \Big\}.
  \end{aligned}
  \end{equation}
   In Theorem \ref{subdif_kyFan KP}, we give more explicit expressions for the subdifferential sets of the Ky Fan $p$-$k$ norms for $p\geq 2$. The subdifferential sets have been used in the study of best approximation \cite{singer2013best,tr,Watson1993KyFank,Prop_spectrZietak1993}. For further details, refer to \cite{ PGmatrixsubspace, Bhatt_Priyanka,Watson1993KyFank,KyFan_ortho_Grover}.)
  Recently, it has been used to investigate minimal self-adjoint operators for matrices and compact operators in \cite{BOTtaziMinimal_com}.
  
  Let $Y_\infty\in\Mc$ be a spectral approximation of $A.$ Let $R_\infty=A-Y_\infty$. Let $s_0$ be the multiplicity of $\sigma_1(R_\infty)$ and let $I_{s_0}$ denote the $s_0\times s_0$ identity matrix. Let $R_\infty=W\operatorname{diag}(\boldsymbol{\sigma}(R_\infty))Z^T=\sigma_1(R_\infty)W_1Z_1^T+W_2\Sigma_2Z_2^T
  $
  be a singular value decomposition of $R_\infty$, where $W=[W_1,W_2]$, $\operatorname{diag}(\boldsymbol{\sigma}(R_\infty))=\Sigma_1\oplus\Sigma_2$, 
  and $Z=[Z_1,Z_2]$ with $W_1,Z_1\in\M_{m\times s_0}$ and $\Sigma_1=\sigma_1(R_\infty)I_{s_0}$. It was shown in \cite[Theorem 7.3]{surveyZ} that for every spectral approximation $\widehat{Y}\in\Mc$, there exists $\widehat{\Sigma_Y}$ with $\|\widehat{\Sigma_Y}\|_\infty\leq\sigma_1(R_\infty)$ such that
  $
  \widehat{Y}=A-\sigma_1(R_\infty)W_1Z_1^T-W_2\widehat{\Sigma_Y}Z_2^T.$
 For more similar properties of spectral approximations, see \cite{Prop_spectrZietak1993,Liesen_Tichy2009,PGmatrixsubspace}.
   In~\cite{Watson1993KyFank, KyFan_ortho_Grover}, best approximations with respect to the Ky Fan $k$ norms are characterized. In Theorem \ref{kthsingulavaleRpk1}, we present a characterization of the best approximations with respect to the Ky Fan $p$- $k$ norms. A corresponding characterization in $\mathbb{R}^n$ with respect to the $ k$-major $ \ell_p$ norm (the restriction of the Ky Fan $ p $-$ k$ norm to the space of  $n \times n$ real diagonal matrices) is presented in \cite{watson1linear9k9major4lp}.
  
  In Section~\ref{sec2}, we present an explicit expression for the subdifferential set of the Ky Fan $p$-$k$ norm for $p\geq 2.$ As a consequence, we give a characterization of approximate orthogonality and characterize the Birkhoff-James orthogonality to a subspace of $\M_{m\times n}(\C)$ with respect to the Ky Fan $p$-$k$ norm. In Section \ref{sec3}, we give some results towards understanding the properties of best approximations with respect to the Ky Fan $p$-$k$ norm and give the proof of \eqref{conjecture} in special cases.   
  \section{Subdifferential of the Ky Fan $p$-$k$ Norm}\label{sec2}
  We first observe that, by taking $(A^*A)^\frac{p}{2}$ as the Hermitian matrix in \cite[Theorem A.2., Ch.~20]{marshall1979inequalities}, another expression of the Ky Fan $p$-$k$ norm of $A$ can be given as follows:
  \begin{equation}\label{1}
  	\begin{aligned}
  		\|A\|_{(p,k)}
  		&= \left(\max _{\substack{U\in\M_{n\times k}(\C) \\ U^* U=I_k}} \Re\, \tr( U^* (A^*A)^\frac{p}{2} U)\right)^\frac{1}{p}\\
  		&= \max _{\substack{U\in\M_{n\times k}(\C) \\ U^* U=I_k}}\left( \Re\, \tr( U^* (A^*A)^\frac{p}{2} U)\right)^\frac{1}{p}.
  	\end{aligned}
  \end{equation}

  We now recall a result on the Fr\'echet differentiability of matrix-valued functions defined over a subset of Hermitian matrices.
    Before stating the result, we introduce the following notations.
    
    Let $\mathcal{I}\subset\R$ be the open interval. Let $C^1(\mathcal{I})$ denote the space of continuously differentiable real-valued functions on $\mathcal{I}.$ For a function $f\in C^1(\mathcal{I}),$ define the map $f^{(1)} : \mathcal{I} \times \mathcal{I} \to \mathbb{R}$ by
    $$
    f^{(1)}(\lambda, \alpha) = 
    \begin{cases}
    	\displaystyle\frac{f(\lambda) - f(\alpha)}{\lambda - \alpha}, & \lambda \neq \alpha, \\
    	f'(\lambda), & \lambda = \alpha.
    \end{cases}
    $$ Let $\H_n(\C)$ denote the set of all $n \times n$ Hermitian matrices. Let $\G(\mathcal{I})$ be the set of Hermitian matrices whose eigenvalues lie in $\mathcal{I}.$ Let $B\in\mathcal{G}(\mathcal{I})$ with spectral decomposition $B = UDU^*,$ where $D=\operatorname{diag}(\lambda_1,\dots,\lambda_{n}).$ Define the induced map $f:\G(\mathcal{I})\to\H_n(\C)$ by $f(B)=Uf(D)U^*,$ where $f(D)=\operatorname{diag}(f(\lambda_1),\dots,f(\lambda_{n})).$ Let $\mathcal{D}f(B)$ denote the Fr\'echet derivative of the function $f$ at $B$.
     \begin{pro}\cite{bhatia2013matrix, bhatia2009positive}\label{bhatiadiff}
    	Let $B\in\G(\mathcal{I})$ be as above. Then, for any $H\in\H_n(\C),$
    	$$\mathcal{D}f(B)(H) = U\left[f^{(1)}(D) \circ (U^*HU)\right]U^*,$$
    	where $f^{(1)}(D)$ is the matrix whose $(i,j)$-entry is $f^{(1)}(\lambda_i, \lambda_j),$ and $\circ$ denotes the Schur-product of two matrices.
    \end{pro}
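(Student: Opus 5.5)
The plan is to prove the formula first for polynomials by direct computation, and then pass to general $f\in C^1(\mathcal{I})$ by uniform polynomial approximation of $f$ together with $f'$. The limit step is justified by a uniform bound on the approximate derivatives in a neighbourhood of $B$. Throughout, $\|\cdot\|_2$ (the Frobenius norm) is used on $\H_n(\C)$; all norms on this finite-dimensional space are equivalent, so this costs nothing.

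\emph{Polynomial case.} Let $f(t)=t^m$. Expanding $(B+H)^m$ and keeping the terms that are linear in $H$ gives
$$\mathcal{D}f(B)(H)=\sum_{j=0}^{m-1}B^jHB^{m-1-j}.$$
Write $K=U^*HU$ and conjugate by $U$. The $(i,j)$-entry of $\sum_{l}D^lKD^{m-1-l}$ is $K_{ij}\sum_{l=0}^{m-1}\lambda_i^l\lambda_j^{m-1-l}$. This sum equals $(\lambda_i^m-\lambda_j^m)/(\lambda_i-\lambda_j)$ if $\lambda_i\ne\lambda_j$, and $m\lambda_i^{m-1}$ otherwise; that is, it is exactly $f^{(1)}(\lambda_i,\lambda_j)$. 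Both sides of the claimed formula are linear in $f$, so the formula holds for every real polynomial.

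\emph{Approximation.} Fix a compact interval $[a,b]\subset\mathcal{I}$ whose interior contains the spectrum of $B$. By Weyl's perturbation inequality, there is a convex neighbourhood $\mathcal{N}$ of $B$ in $\H_n(\C)$ all of whose members have spectrum in $[a,b]$. By Weierstrass, choose polynomials $q_k\to f'$ uniformly on $[a,b]$, and set $p_k(t)=f(a)+\int_a^t q_k$. Then $p_k\to f$ and $p_k'\to f'$ uniformly on $[a,b]$. Using
$$g^{(1)}(\lambda,\alpha)=\int_0^1 g'(s\lambda+(1-s)\alpha)\,ds,$$
the divided differences also satisfy $p_k^{(1)}\to f^{(1)}$ uniformly on $[a,b]^2$, and $f^{(1)}$ is continuous there. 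By the spectral definition of $f(B')$, we have $\|p_k(B')-f(B')\|_2\le\sqrt n\,\sup_{[a,b]}|p_k-f|$, so $p_k\to f$ uniformly on $\mathcal{N}$.

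\emph{Main obstacle: interchanging limit and derivative.} The Schur-product formula gives a uniform bound for every $B'=VD'V^*\in\mathcal{N}$. Unitary invariance of $\|\cdot\|_2$ and $|(X\circ Y)_{ij}|\le\max|X_{ij}|\,|Y_{ij}|$ give
$$\|\mathcal{D}p_k(B')(H)-\mathcal{D}p_l(B')(H)\|_2\le\sup_{[a,b]^2}|p_k^{(1)}-p_l^{(1)}|\,\|H\|_2\le\sup_{[a,b]}|q_k-q_l|\,\|H\|_2 .$$
Hence the derivatives $\mathcal{D}p_k$ form a uniformly Cauchy sequence of operator-valued maps on $\mathcal{N}$. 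Let $L(B')$ denote their limit, so that
$$L(B')(H)=V\bigl[f^{(1)}(D')\circ(V^*HV)\bigr]V^*.$$
This expression is independent of the choice of $V$, because $L(B')$ is a limit of the well-defined maps $\mathcal{D}p_k(B')$. We now apply the standard theorem on differentiating a limit of maps on a convex set: if $g_k\to g$ pointwise and $\mathcal{D}g_k\to L$ uniformly, then $g$ is Fréchet differentiable with $\mathcal{D}g=L$. Its proof uses the mean value inequality
$$\|(g_k-g_l)(B+H)-(g_k-g_l)(B)-(\mathcal{D}g_k-\mathcal{D}g_l)(B)H\|\le\sup_{\mathcal{N}}\|\mathcal{D}g_k-\mathcal{D}g_l\|\,\|H\|,$$
and then lets $l\to\infty$ followed by an $\varepsilon/3$ argument. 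Applying it with $g_k=p_k$ gives $\mathcal{D}f(B)=L(B)$, which is the asserted formula.
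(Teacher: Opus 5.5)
Your proof is correct and is essentially the argument the paper defers to (Bhatia, \emph{Matrix Analysis}, Theorem V.3.3). That argument runs: the polynomial case via $\sum_{j}B^jHB^{m-1-j}$ and divided differences, simultaneous uniform approximation of $f$ and $f'$ by polynomials, the Schur-product bound making the $\mathcal{D}p_k$ uniformly Cauchy, and the mean value inequality with an $\varepsilon/3$ argument; your fixing of $[a,b]$ and the convex neighbourhood $\mathcal{N}$ in advance is exactly the ``minor modification'' needed for a general open interval $\mathcal{I}$. One harmless slip: because you subtract $(\mathcal{D}g_k-\mathcal{D}g_l)(B)H$, your displayed mean-value bound needs $2\sup_{\mathcal{N}}\|\mathcal{D}g_k-\mathcal{D}g_l\|\,\|H\|$ on the right, which changes nothing in the conclusion.
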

   This is Equation~(V.13) in \cite{bhatia2013matrix}, and its proof is presented in \cite[Theorem 5.3.3]{bhatia2013matrix} for the interval $\mathcal{I}=(-1,1).$ With minor modifications, the same proof can be extended to any open interval $\mathcal{I}.$ Alternatively, another proof can be found in \cite[Theorem 5.3.1]{bhatia2009positive}.
    
  For $p> 2,$ let $\hat{f} : (-1, \infty) \to \R $ be defined by $\hat{f}(x) = |x|^{\frac{p}{2}}.$ Then $\hat{f} \in C^1((-1, \infty))$ and $\hat{f}$ induces a map on $\G((-1, \infty)).$ Using the above proposition, we get the following lemma. 
  
 \begin{lemma}\label{fv_derivative}
  	 Let $A\in\M_{m\times n}(\C)$ and $ 2<p<\infty .$ Let $V$ be a $n\times k$ matrix whose columns are $v_1,v_2,\ldots,v_k$ satisfying $A^*Av_i=\sigma_i^2(A)v_i,$ for $1\leq i\leq k.$ Let $f_V:\M_{m\times n}(\C)\to \R$ be a function defined as  \begin{equation*}
  		f_V(X)=\Re\,\tr(VV^*(X^*X)^\frac{p}{2})=\Re\,\tr\left(\sum\limits_{i=1}^{k}v_iv_i^*(X^*X)^\frac{p}{2}\right) \text{ for } X\in\M_{m\times n}(\C).
  	\end{equation*}  Then, the derivative of $f_V$ at $A$ is given by \begin{equation}\label{derivativefv}
  		\D f_V(A)(X)=p\,\Re\,\tr\left(\big(AVV^*(A^*A)^\frac{p-2}{2}\big)^*X\right) \text{ for } X\in\M_{m\times n}(\C).
  	\end{equation}
  \end{lemma}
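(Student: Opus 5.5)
We write $f_V=\ell\circ\hat f\circ q$, where $q(X)=X^*X$ maps $\M_{m\times n}(\C)$ into the positive semidefinite matrices (hence into $\G((-1,\infty))$), $\hat f(B)=B^{p/2}$ is the induced map from the function $\hat f(x)=|x|^{p/2}$ introduced above, and $\ell(C)=\Re\,\tr(VV^*C)$ is real linear. The chain rule then gives
\[
\D f_V(A)(X)=\Re\,\tr\big(VV^*\,\D\hat f(A^*A)(\D q(A)(X))\big),\qquad \D q(A)(X)=A^*X+X^*A .
\]
The map $q$ is smooth with the derivative shown. The map $\hat f$ is Fr\'echet differentiable at $A^*A$ by Proposition \ref{bhatiadiff}, applied on the open interval $\mathcal I=(-1,\infty)$, which contains the spectrum of $A^*A$ even when $A^*A$ has zero eigenvalues. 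This is precisely why the interval is chosen to extend past $0$ and why $p>2$ is needed: it makes $\hat f$ a $C^1$ function there.

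Next we evaluate the middle term with Proposition \ref{bhatiadiff}. Write $A^*A=UDU^*$ with $D=\operatorname{diag}(\lambda_1,\dots,\lambda_n)$, and choose the unitary $U$ so that its first $k$ columns are $v_1,\dots,v_k$. This is possible by the hypothesis, once we assume, as the intended reading requires, that the $v_i$ are orthonormal. Then $U^*VV^*U=E_k:=I_k\oplus 0$. Put $H=A^*X+X^*A$ and $\widetilde H=U^*HU$. We get
\[
\tr\big(VV^*\,U[\hat f^{(1)}(D)\circ \widetilde H]U^*\big)=\tr\big(E_k[\hat f^{(1)}(D)\circ\widetilde H]\big)=\sum_{i=1}^k \hat f^{(1)}(\lambda_i,\lambda_i)\,\widetilde H_{ii}.
\]
This step is the crux: only diagonal entries of the Schur product survive, so the off-diagonal divided differences never appear. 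Since $\hat f^{(1)}(\lambda_i,\lambda_i)=\hat f'(\lambda_i)=\tfrac p2\lambda_i^{(p-2)/2}$ with $\lambda_i=\sigma_i^2(A)$, the sum equals $\tfrac p2\,\tr\big(E_k D^{(p-2)/2}\widetilde H\big)=\tfrac p2\,\tr\big(VV^*(A^*A)^{\frac{p-2}{2}}H\big)$. Here we use $U E_k D^{(p-2)/2}U^*=VV^*(A^*A)^{(p-2)/2}$, which holds because the $v_i$ are eigenvectors.

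Finally, set $M=VV^*(A^*A)^{(p-2)/2}$. This matrix is Hermitian, because $VV^*$ commutes with $A^*A$. Therefore
\[
\Re\,\tr(M A^*X)+\Re\,\tr(M X^*A)=2\,\Re\,\tr(MA^*X),
\]
since $\tr(MX^*A)=\overline{\tr(A^*XM)}=\overline{\tr(MA^*X)}$. Multiplying by $\tfrac p2$ gives $p\,\Re\,\tr\big((AM)^*X\big)$, because $(AM)^*=MA^*$. This is \eqref{derivativefv}.

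The main obstacle is the second step: justifying the application of Proposition \ref{bhatiadiff} at a matrix with possibly zero eigenvalues, and the bookkeeping needed to pick $U$ adapted to $V$ so that the Schur product collapses to its diagonal. Everything else is routine.
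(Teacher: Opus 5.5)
Your proposal is correct and follows essentially the same route as the paper. Both arguments factor $f_V=h_V\circ\hat f\circ g_1$, apply the chain rule, and evaluate Proposition~\ref{bhatiadiff} in an eigenbasis of $A^*A$ whose first $k$ vectors are $v_1,\dots,v_k$, so that only the diagonal divided differences $\hat f'(\sigma_i^2(A))=\frac p2\sigma_i^{p-2}(A)$ survive. Your bookkeeping is cleaner than the paper's, which reuses $V$ for the full unitary eigenvector matrix. You are also right to make explicit the orthonormality of the $v_i$, which the paper tacitly assumes.
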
		 
  \begin{proof} For each $V\in \M_{n\times k}(\C)$, let $h_V:\H_{n}(\C)\to \R$ be the differentiable function defined as $h_V(H)=\Re\,\tr(VV^*H)$ for $H\in\H_{n}(\C).$ Let $g_1:\M_{m\times n}(\C)\to \G$ be the differentiable function defined as $g_1(Y)=Y^*Y$ for $Y\in\M_{m\times n}(\C).$  
  	Thus, for $A\in \M_{m\times n}(\C),$ $f_V(A)=(h_V\circ \hat{f}\circ g_1)(A)$. By the chain rule, we get \begin{equation}\label{chainrule}
  		\D f_V(A)(X)=\D h_V\big(\hat{f}\circ g_1(A)\big)\Big(\D \hat{f}(g_1(A))\big(\D g_1(A)(X)\big)\Big).
  	\end{equation} 
  	Since $A^*A\in\G((-1, \infty)),$ by Proposition \ref{bhatiadiff} we get
  	$$\D \hat{f}(g_1(A))\big(\D g_1(A)(X)\big)=\D \hat{f}(A^*A)(A^*X+X^*A)=V[\hat{f}^{(1)}(\Sigma^*\Sigma)\circ(V^*(A^*X+X^*A)V)]V^*,$$ where  $\Sigma^*\Sigma=\operatorname{diag}[\sigma_1^2(A),\ldots,\sigma_n^2(A)].$ 
  	Observe that the $(i,j)$th entries of $(XV)^*A
  	V$ and $(AV)^*XV$ are $\langle X v_i, Av_j\rangle$ 
  	 and  $\langle Av_i, Xv_j\rangle,$ 
  	  respectively. Additionally,  $V^* \sum_{i=1}^{k}v_iv_i^*V=\begin{bmatrix}
  		I_{k\times k}&0\\0&0
  	\end{bmatrix}.$ Thus,
  	 \begin{align*}\D f_V(A)(X)=&\Re\,\tr\left(\sum_{i=1}^{k}v_iv_i^*\left(\D \hat{f}(A^*A)(A^*X+X^*A)\right)\right)\\
  		&=p\sum_{i=1}^{k}\sigma_i^{p-2}(A)\Re\langle Av_i,Xv_i \rangle\\
  		&=p\sum_{i=1}^{k}\Re\langle X^*Av_i,(A^*A)^\frac{p-2}{2}v_i \rangle\\
  		&=p\,\Re\,\tr \left(\big(AVV^*(A^*A)^\frac{p-2}{2}\big)^*X\right).   
  	\end{align*}

  \end{proof}
  The following proposition follows from \cite[Theorem 2.4.18]{zalinescuGeneralconvex}.
  \begin{pro}[\cite{zalinescuGeneralconvex}]\label{supconsubdif}
  	Let J be a compact set in some metric space. Let $\{f_j\}_{j\in J}$ be a collection of continuous convex functions from $\M_{m\times n}(\C)$ to $\R$ such that for each $M\in\M_{m\times n}(\C),$ the maps $j \to f_j(M)$ are upper semi-continuous. Let $g : \M_{m\times n}(\C) \to \R$ be defined as $g(M) = \sup\limits_{j \in J}f_j(M).$ For  $N\in\M_{m\times n}(\C),$ let $J(N) = \{i \in J:g(N)=f_i(N)\}.$ Then  $$\partial g(N) = \operatorname{conv}(\cup\{\partial f_i(N) :i \in J(N)\}).$$
  \end{pro}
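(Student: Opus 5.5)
The approach is a Danskin-type formula for the directional derivative of $g$, followed by a separation argument. Everything takes place in the finite-dimensional real space $\M_{m\times n}(\C)$ with the pairing $\Re\,\tr(G^*X)$, so subdifferentials are compact convex sets. For continuous convex $f$ we use the standard max formula
$$f'(N;H)=\max_{G\in\partial f(N)}\Re\,\tr(G^*H).$$

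\emph{Preliminaries.} For each $M$, the map $j\mapsto f_j(M)$ is upper semi-continuous on the compact set $J$. Hence the supremum $g(M)$ is finite and attained. It follows that $g$ is convex and finite, hence continuous, and that $J(N)\neq\emptyset$. Moreover $J(N)=\{i: f_i(N)\ge g(N)\}$ is closed by upper semi-continuity, hence compact.

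\emph{The inclusion $\supseteq$.} Let $i\in J(N)$ and $G\in\partial f_i(N)$. Then
$$\Re\,\tr(G^*(M-N))\le f_i(M)-f_i(N)\le g(M)-g(N)$$
for every $M$, so $G\in\partial g(N)$. Since $\partial g(N)$ is convex, it contains the convex hull of the union.

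\emph{$K=\cup_{i\in J(N)}\partial f_i(N)$ is compact.} Boundedness: for $i\in J(N)$ and $\|H\|\le 1$,
$$\Re\,\tr(G^*H)\le f_i(N+H)-g(N)\le \max_{\|H\|\le1}g(N+H)-g(N),$$
which is a uniform bound. Closedness: let $G_n\in\partial f_{i_n}(N)$ with $G_n\to G$, and pass to a subsequence with $i_n\to i\in J(N)$. Then $\Re\,\tr(G_n^*(M-N))\le f_{i_n}(M)-g(N)$, and upper semi-continuity in $j$ gives $\Re\,\tr(G^*(M-N))\le f_i(M)-f_i(N)$. By Carathéodory's theorem, $\operatorname{conv}K$ is then compact.

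\emph{The inclusion $\subseteq$.} Suppose $\Phi\in\partial g(N)\setminus\operatorname{conv}K$. Separate the point from this compact convex set to get $H$ with
$$\Re\,\tr(\Phi^*H)>\max_{G\in K}\Re\,\tr(G^*H)=\max_{i\in J(N)}f_i'(N;H).$$
Since $g'(N;H)\ge\Re\,\tr(\Phi^*H)$, it suffices to prove the Danskin inequality
$$g'(N;H)\le\max_{i\in J(N)}f_i'(N;H).$$

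\emph{The Danskin inequality (main obstacle).} The difficulty is controlling the non-active indices without any uniform Lipschitz bound. Take $t_n\downarrow0$ and indices $i_n$ with $g(N+t_nH)=f_{i_n}(N+t_nH)$, passing to a subsequence with $i_n\to i$. Fix $s>0$. For $t_n<s$, convexity of $f_{i_n}$ on the segment from $N$ to $N+sH$ gives
$$f_{i_n}(N+t_nH)\le(1-t_n/s)f_{i_n}(N)+(t_n/s)f_{i_n}(N+sH).$$
Two consequences follow.

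First, bounding $f_{i_n}(N+sH)\le g(N+sH)$ yields $f_{i_n}(N)\ge g(N+t_nH)-O(t_n)\to g(N)$. Together with $f_{i_n}(N)\le g(N)$, this shows $f_{i_n}(N)\to g(N)$. Upper semi-continuity then gives $f_i(N)\ge g(N)$, so $i\in J(N)$.

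Second, subtracting $g(N)$ and dividing by $t_n$ gives
$$\frac{g(N+t_nH)-g(N)}{t_n}\le\frac{f_{i_n}(N)-g(N)}{t_n}+\frac{f_{i_n}(N+sH)-f_{i_n}(N)}{s}.$$
The first term on the right is $\le0$. Taking $\limsup$ and using upper semi-continuity together with $f_{i_n}(N)\to g(N)=f_i(N)$ yields
$$g'(N;H)\le\frac{f_i(N+sH)-f_i(N)}{s}.$$
Since $J$ is compact, a limit index of the $i_n$ lies in a set that does not depend on $s$. Passing to a further subsequence if needed, we let $s\downarrow0$ and obtain $g'(N;H)\le f_i'(N;H)$ for some $i\in J(N)$. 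This is the Danskin inequality, which contradicts the separation and proves the proposition.
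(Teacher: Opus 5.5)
Your proof is correct, and it does more than the paper does. The paper gives no argument at all. It only says the proposition follows from \cite[Theorem 2.4.18]{zalinescuGeneralconvex}, a result on suprema of convex functions over locally convex spaces indexed by a compact topological space. In that generality one can only expect the weak$^*$-closed convex hull of $\bigcup_{i\in J(N)}\partial f_i(N)$.

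You instead work directly in the finite-dimensional real space $\M_{m\times n}(\C)$ with the pairing $\Re\,\tr(G^*X)$, in four steps:
\begin{enumerate}
\item the inclusion $\supseteq$, from $f_i\le g$ and $f_i(N)=g(N)$;
\item compactness of $K=\bigcup_{i\in J(N)}\partial f_i(N)$, with a uniform bound from continuity of $g$ and closedness from closedness of $J(N)$ plus upper semicontinuity in $j$, hence compactness of $\operatorname{conv}K$ by Carath\'eodory;
\item strict separation of a point of $\partial g(N)\setminus\operatorname{conv}K$;
\item the Danskin inequality $g'(N;H)\le\max_{i\in J(N)}f_i'(N;H)$, obtained from convexity of $f_{i_n}$ on the segment $[N,N+sH]$, which avoids any equi-Lipschitz hypothesis on the family.
\end{enumerate}

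Your route supplies exactly the step the paper leaves implicit. Compactness of $K$ is what lets the closure be dropped, so the formula holds with $\operatorname{conv}$ as stated. The citation buys generality that is not needed here: infinite-dimensional spaces, and non-metrizable index sets handled with nets, whereas you use sequential compactness of the metric space $J$.

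One presentational remark. In the Danskin step you fix the subsequence $i_n\to i$ before choosing $s$, so the same $i$ works for every $s>0$ automatically. The closing sentence about a limit index lying in a set independent of $s$, and about passing to a further subsequence, is unnecessary and can be deleted.
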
 
  We now state the main result of this section.
  \begin{theorem}\label{subdif_kyFan KP}
  	Let $A\in \M_{m\times n}(\C)\setminus\{0\}$ and $2\leq p< \infty$. Then
  	\begin{equation}
  		\begin{split}\label{subdiff}
  			\partial\|A\|_{(p,k)}=\operatorname{conv}&\Bigg\{\frac{1}{\|A\|_{(p,k)}^{p-1}}A(A^*A)^\frac{p-2}{2}\sum_{i=1}^{k}v_iv_i^*: v_1,v_2,\ldots,v_k \text{ o.n. vectors with }\\
  			& A^*A v_i=\sigma_i^2(A) v_i \text{ for all }1\leq i \leq k \Bigg\}.
  		\end{split}
  	\end{equation}
  	
  \end{theorem}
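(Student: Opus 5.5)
We will write the Ky Fan $p$-$k$ norm as a pointwise maximum of convex functions and then apply Proposition \ref{supconsubdif}, with Lemma \ref{fv_derivative} supplying the gradients of the active pieces. By \eqref{1},
\[
\|X\|_{(p,k)}=\max_{U\in J}\big(\Re\,\tr(UU^*(X^*X)^{\frac p2})\big)^{\frac1p},\qquad J=\{U\in\M_{n\times k}(\C):U^*U=I_k\}.
\]
Here $J$ (the Stiefel manifold) is compact, and for fixed $X$ the map $U\mapsto\tr(U^*(X^*X)^{p/2}U)$ is continuous. Set $F_U(X)=\big(\tr(U^*(X^*X)^{p/2}U)\big)^{1/p}$.

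The first thing to check is that each $F_U$ is convex, since Proposition \ref{supconsubdif} needs this. I expect to show it through the identity $F_U(X)=\|(X^*X)^{1/2}U\|_p$-type expressions. The step I expect to be the main obstacle is this: as stated, $F_U$ need not be convex in $X$ for a fixed $U$. If that fails, I will use a different convex representation. One option is to write $\|X\|_{(p,k)}=\max\{\|P X Q\|_p\}$ over rank-$k$ partial isometries, that is, over $P\in\M_{m\times k}$, $Q\in\M_{n\times k}$ with orthonormal columns, using $\|\,\cdot\,\|_p$ of the compressed matrix $P^*XQ$. Each such function is a composition of a norm with a linear map, hence convex and continuous, and the maximum over these equals $\|X\|_{(p,k)}$ by the interlacing inequality $\sigma_i(P^*XQ)\le\sigma_i(X)$. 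In that representation the maximizers are exactly those $P,Q$ spanning top-$k$ singular subspaces. The derivative of $\|P^*XQ\|_p$ at $A$ is then $\|A\|_{(p,k)}^{1-p}\,P\big(P^*AQ\big((P^*AQ)^*(P^*AQ)\big)^{\frac{p-2}{2}}\big)Q^*$. After aligning $P=AV\Sigma_k^{-1}$ and $Q=V$ (orthonormalized suitably when some $\sigma_i=0$), this equals $\|A\|_{(p,k)}^{1-p}A(A^*A)^{\frac{p-2}{2}}VV^*$. In either approach the set of subgradients of each active piece is a singleton, because each active piece is differentiable at $A$.

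Next I identify the active set $J(A)$. By the equality case of Ky Fan's maximum principle, $\tr(U^*(A^*A)^{p/2}U)=\sum_{i\le k}\sigma_i^p(A)$ holds iff the range of $U$ is spanned by orthonormal eigenvectors $v_1,\dots,v_k$ of $A^*A$ for $\sigma_1^2,\dots,\sigma_k^2$. This includes the ambiguity when $\sigma_k$ has multiplicity extending beyond index $k$. Since $UU^*=VV^*$ depends only on the range, I can take $U=V=[v_1,\dots,v_k]$.

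For each active $V$, I differentiate $F_V=f_V^{1/p}$ at $A$. The chain rule and Lemma \ref{fv_derivative} give
\[
\D F_V(A)(X)=\tfrac1p f_V(A)^{\frac1p-1}\,p\,\Re\,\tr\big((AVV^*(A^*A)^{\frac{p-2}{2}})^*X\big),
\]
and $f_V(A)=\|A\|_{(p,k)}^p\ne0$ because $A\ne0$. Since $VV^*$ commutes with $(A^*A)^{(p-2)/2}$, the gradient is $\|A\|_{(p,k)}^{1-p}A(A^*A)^{\frac{p-2}{2}}\sum v_iv_i^*$. For a differentiable convex function the subdifferential is the singleton gradient. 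The case $p=2$ has to be treated separately, since Lemma \ref{fv_derivative} assumes $p>2$. There $f_V(X)=\|XV\|_2^2$ is smooth with gradient $2AVV^*$, and this matches the formula.

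Finally, Proposition \ref{supconsubdif} gives $\partial\|A\|_{(p,k)}$ as the convex hull of these gradients over all admissible orthonormal $v_1,\dots,v_k$, which is exactly \eqref{subdiff}.
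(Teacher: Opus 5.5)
The gap is the one you flagged yourself, and it is real. For $p>2$ the pieces $F_U$ are not convex. Take $m=n=2$, $k=1$, $p=4$, $U=e_1$ and $X_a=\begin{bmatrix} a&1\\0&0\end{bmatrix}$. Then $F_U(X_a)=\|X_a^*X_ae_1\|^{1/2}=|a|^{1/2}(1+a^2)^{1/4}$, so $F_U(X_{1/2})\approx 0.748>0.595\approx\tfrac12\big(F_U(X_0)+F_U(X_1)\big)$.

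Consequently Proposition~\ref{supconsubdif} cannot be applied to $\{F_U\}_{U\in J}$. Your last three steps (Ky Fan equality for the active set, the gradient of $f_V^{1/p}$ via Lemma~\ref{fv_derivative}, the convex hull) prove the theorem only for $p=2$, where $F_U(X)=\|XU\|_2$ is convex. The paper's proof follows exactly this route, with $\phi_U=F_U$, and invokes Proposition~\ref{supconsubdif} without checking convexity, so the same gap is present there as written. It could be closed by a Danskin-type formula for the directional derivative of $\max_U F_U^p$, which needs only differentiability of the pieces and convexity of the maximum, but not by Proposition~\ref{supconsubdif}.

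Your fallback to a convex representation is the right fix. The expression $\|(X^*X)^{1/2}U\|_p=\|XU\|_p$ you allude to is convex, but it is not $F_U(X)$; in the example it equals $|a|$. Still, Cauchy interlacing (with equality at top right singular vectors) gives $\max_{U\in J}\|XU\|_p=\|X\|_{(p,k)}$. So it is a valid convex representation, and simpler than the two-sided $\|P^*XQ\|_p$.

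What is still missing is to carry out the active-set and gradient steps in the convex representation. You compute the gradient only at one aligned pair. Proposition~\ref{supconsubdif} takes the hull over \emph{all} active indices, so you must check that each one yields an element of the stated set.

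For a pair $(P,Q)$, the condition $\|P^*AQ\|_p=\|A\|_{(p,k)}$ forces $\sigma_i(P^*AQ)=\sigma_i(A)$ for $i\le k$. Hence there is equality throughout $\|P^*AQ\|_2^2\le\tr(Q^*A^*AQ)\le\sum_{i\le k}\sigma_i^2(A)$. So $\operatorname{range}Q$ is spanned by orthonormal $v_1,\dots,v_k$ with $A^*Av_i=\sigma_i^2(A)v_i$, and $PP^*AQ=AQ$. When $\sigma_k(A)=0$, $P$ need not span a left singular subspace, contrary to your description.

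Now set $C=P^*AQ\neq 0$. Then $PC=AQ$ and $C^*C=Q^*A^*AQ$. Since $\operatorname{range}Q$ is $A^*A$-invariant, $\|C\|_p^{1-p}PC(C^*C)^{\frac{p-2}{2}}Q^*=\|A\|_{(p,k)}^{1-p}AQ(Q^*A^*AQ)^{\frac{p-2}{2}}Q^*=\|A\|_{(p,k)}^{1-p}A(A^*A)^{\frac{p-2}{2}}\sum_{i=1}^k v_iv_i^*$.

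Conversely, $Q=[v_1,\dots,v_k]$ together with any isometry $P$ whose range contains $\operatorname{range}(AQ)$ is active. So every element of the stated set occurs. Only the gradient $\|C\|_p^{1-p}C(C^*C)^{\frac{p-2}{2}}$ of the $c_p$ norm at $C\neq 0$ is needed, not Lemma~\ref{fv_derivative}. With these checks, or the same ones for $\|XU\|_p$ with $P$ dropped, your fallback becomes a complete proof.
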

  
  \begin{proof}
  	Let $J=\{U\in \M_{n\times k}(\C)$: $U^*U=I_k\}$. It is a compact subset of $\M_{n\times k}(\C)$. 
  
  		\textit{Case 1}. $p=2$.  For $U\in J,$ let $h_U:\H_{n}(\C)\to \R$ be the differentiable function defined as $h_U(H)=\Re\,\tr(UU^*H).$ Let $g_1:\M_{m\times n}(\C)\to \G$ be the differentiable function defined as $g_1(B)=B^*B.$
  		By the chain rule, we have for any $X\in \M_{m\times n}(\C),$ 
  		\begin{align*}
  			\D (h_U\circ g_1)(A)(X)&=\D h_U( g_1(A))\Big(\D g_1(A)(X)\Big)\\
  			&=\Re\,\tr (UU^*(A^*X+X^*A))\\
  			&=2\ \Re\,\tr(UU^*A^*X).
  		\end{align*} 
  		Now $\|A\|_{(2,k)}=\max\limits_{U\in J}\Big(h_U\circ g_1(A)\Big)^\frac{1}{2}.$ Thus, by Proposition \ref{supconsubdif}, we get the required result.
  		
  		\textit{Case 2}.
  	For $U\in J,$ let $\phi_U:\M_n(\C)\to  \R$ be  defined as $\phi_U(B)=(\Re\, \tr( UU^* (B^*B)^\frac{p}{2}))^\frac{1}{p} .$ Then, we get $$\|A\|_{(p,k)} = \max _{U\in J}\phi_U(A).$$ Let $J(A)=\{V\in J:\|A\|_{(p,k)}=\phi_V(A)\}.$ The set $J(A)$ will be equal to the set of $n\times k$ matrices whose columns are  right singular vectors of $A,$ that is, $v_1,v_2,\ldots,v_k$ corresponding to singular values $\sigma_1(A),\sigma_2(A),\ldots,\sigma_k(A),$ respectively. Let $f_V$ be the map as in Lemma \ref{fv_derivative}. This will imply that for $V\in J(A),$ $\phi_V=f_V^\frac{1}{p}.$ 
  	Since $f_V$ is a differentiable map, $\phi_V$ is also differentiable. By applying the chain rule to $\phi_V$, we obtain 
  	$$\D \phi_V(A)(X)=\frac{1}{p\|A\|_{(p,k)}^{p-1}} \D f_V(A)(X).$$ By Lemma \ref{fv_derivative}, $ \partial \phi_V(A)=\{\D\phi_V(A)\}=\left\{\frac{1}{\|A\|_{(p,k)}^{p-1}}AVV^*(A^*A)^\frac{p-2}{2}\right\}.$ Thus, by using Proposition \ref{supconsubdif}, we get \eqref{subdiff}.
  \end{proof}

  \begin{remark}\label{remark_ext}	
  	\begin{enumerate}
  		\item	Let $2\leq p<\infty.$ Let $A\in\M_{m\times n}(\C)$ such that $\sigma_k(A)>\sigma_{k+1}(A).$ Then, by \cite[Theorem 2.6.5]{horn2012matrix}, the set $\partial\|A\|_{(p,k)}$ is a singleton.
  		\item For $2\leq p<\infty,$ let \begin{align*}
  			\mathcal{K}=\Bigg\{\frac{1}{\|A\|_{(p,k)}^{p-1}}A(A^*A)^\frac{p-2}{2}\sum_{i=1}^{k}v_iv_i^*:& v_1,v_2,\ldots,v_k\text{ o.n vectors satisfying }\\ 
  			&
  			A^*A v_i=\sigma_i^2(A) v_i \text{ for all }1\leq i \leq k \Bigg\}.
  		\end{align*}  Let $q=\frac{p}{p-1}.$ Then $\|K\|_q=1 \text{ for all } K\in\mathcal{K}.$ Since the $c_q$ norm is strictly convex, it follows that $\mathcal{K}$ is the set of extreme points of $\partial\|A\|_{(p,k)}.$
  	\end{enumerate}
  	
  \end{remark}
  Let $A,X\in \M_{m\times n}(\C).$ Then the right hand directional derivative of $\|\cdot\|_{(p,k)}$ at $A$ in the direction of $X$ is defined as $$\D^+\|A\|_{(p,k)}(X)=\lim_{t\to 0^+}\frac{\|A+tX\|_{(p,k)}-\|A\|_{(p,k)}}{t}.$$ 
  \begin{corollary}
  	Let $A,X\in \M_{m\times n}(\C).$ Let $2\leq p<\infty$ and $1\leq k\leq n_0.$ Then
  	$$\D^+\|A\|_{(p,k)}(X)=\max_{\substack{v_1,\ldots,v_k \text{o.n.}\\A^*Av_i=\sigma_i^2(A)v_i}}\frac{1}{\|A\|_{(p,k)}^{p-1}}\sum_{i=1}^{k}\sigma_i^{p-2}(A)\,\Re \langle
  	Av_i,Xv_i\rangle.$$	 
  	
  \end{corollary}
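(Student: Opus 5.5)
The plan is to use the standard fact from convex analysis that, for a continuous convex function $g$ on a finite-dimensional space, the right directional derivative is the support function of the subdifferential:
$$\D^+ g(A)(X)=\max_{G\in\partial g(A)}\Re\,\tr(G^*X).$$
Applying this to $g=\|\cdot\|_{(p,k)}$ and taking $\partial\|A\|_{(p,k)}$ from Theorem \ref{subdif_kyFan KP}, the right-hand side becomes a maximum of a real-linear functional over a convex hull. A linear functional attains its maximum over the convex hull of a compact set at a point of that set. So the maximum may be taken over the generating set $\mathcal{K}$ of Remark \ref{remark_ext}, consisting of the matrices $\frac{1}{\|A\|_{(p,k)}^{p-1}}A(A^*A)^{\frac{p-2}{2}}\sum_{i=1}^k v_iv_i^*$ with $v_1,\ldots,v_k$ orthonormal and $A^*Av_i=\sigma_i^2(A)v_i$. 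This generating set is compact, being the continuous image of a closed subset of the Stiefel manifold $J$.

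Next I will compute $\Re\,\tr(G^*X)$ for one such generator $G$. Since $(A^*A)^{\frac{p-2}{2}}v_i=\sigma_i^{p-2}(A)v_i$, the generator equals $\frac{1}{\|A\|_{(p,k)}^{p-1}}\sum_{i=1}^k\sigma_i^{p-2}(A)Av_iv_i^*$. For $p=2$ the factor $\sigma_i^{0}$ is read as $1$. Then
$$\Re\,\tr(G^*X)=\frac{1}{\|A\|_{(p,k)}^{p-1}}\sum_{i=1}^k\sigma_i^{p-2}(A)\,\Re\,\tr(v_iv_i^*A^*X)=\frac{1}{\|A\|_{(p,k)}^{p-1}}\sum_{i=1}^k\sigma_i^{p-2}(A)\,\Re\langle Av_i,Xv_i\rangle,$$
using $\tr(v_iv_i^*A^*X)=\langle Xv_i,Av_i\rangle$ with the paper's convention, whose real part is symmetric. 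This matches the computation already done in the proof of Lemma \ref{fv_derivative}. Substituting into the max formula gives the claimed expression.

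The main point needing care is the max formula itself. I would cite it from \cite{zalinescuGeneralconvex} (the same source as Proposition \ref{supconsubdif}), or derive it directly. One inequality follows from the subgradient inequality: $\|A+tX\|-\|A\|\ge t\,\Re\,\tr(G^*X)$ for every $G$ in the subdifferential. The reverse inequality follows because the directional derivative $X\mapsto\D^+\|A\|_{(p,k)}(X)$ is sublinear and finite, so Hahn--Banach gives a linear functional below it that attains it at $X$, and that functional lies in $\partial\|A\|_{(p,k)}$.

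A minor issue is $A=0$, which is excluded in Theorem \ref{subdif_kyFan KP}. I would assume $A\neq0$, as the formula's denominator implicitly requires.
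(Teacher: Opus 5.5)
Your proposal is correct and takes essentially the same route as the paper. The paper cites the max formula $\D^+\|A\|_{(p,k)}(X)=\max_{F\in\partial\|A\|_{(p,k)}}\Re\,\tr(X^*F)$ from Grover's thesis, restricts the maximum to the generating set $\mathcal{K}$ of Remark \ref{remark_ext}, and reads off the result. You additionally supply the compactness of $\mathcal{K}$, the explicit trace computation, the $A\neq 0$ caveat, and a subgradient/Hahn--Banach justification of the max formula, all of which the paper leaves implicit.
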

  \begin{proof}
  	By \cite[Theorem 1.2.9]{PriGro_thesis}, we have
  	$$\D^+\|A\|_{(p,k)}(X)=\max_{F\in \partial\|A\|_{(p,k)}}\Re\, \tr(X^*F)=\max_{F\in \mathcal{K} }\Re\, \tr(X^*F),$$ where $	\mathcal{K}$ is the set defined in Remark \ref{remark_ext}. This gives the required result. 
  \end{proof}
  
   The right hand derivative is directly related to the concept of approximate Birkhoff orthogonality, see \cite[Theorem 2.2]{ACCF}. The concept of \emph{Birkhoff-James orthogonality} \cite{Birkhoff1935,RCJames_ortho} plays a fundamental role in the study of the geometry of Banach spaces.
 For $A,B\in\M_{m\times n}(\C)$, $A$ is said to be Birkhoff-James orthogonal to $B$ if $\|A+\lambda B\|\geq \|A\|$ $\text{ for all } \lambda \in \C.$ A matrix $A$ is said to be Birkhoff-James orthogonal to a subspace $\mathcal{M}$ of $\M_{m\times n}(\C)$ if $\|A+B\|\geq \|A\|$ $\text{ for all } B\in \Mc.$ Subsequently, certain approximation concepts extending notion of this orthogonality were introduced in \cite{epsilon_app,onapproxSSDrag}. In \cite{epsilon_app}, Chmieli\'nski introduced the notion of $\varepsilon$-Birkhoff orthogonality. This was later characterized in the space of bounded linear operators defined on normed spaces in \cite{ArDeKpOnsomegeometric}.
  \begin{definition}
  	\cite{epsilon_app}
  	Let $A,B\in\M_{m\times n}(\C)$ and $\varepsilon \in [0,1)$. Then $A$ is said to be $\varepsilon$-Birkhoff orthogonal to $B$ if $\|A+\lambda B\|^2\geq \|A\|^2-2\varepsilon\|A\|\|\lambda B\| \, \text{ for all }  \lambda \in \C.$ We denote this relation as  $A\perp_{\|\cdot\|}^\varepsilon B.$
  \end{definition} In the subsequent discussion, using the description of $\partial \|\cdot\|_{(p,k)}$, we characterize $\varepsilon$-Birkhoff orthogonality as well as give a necessary condition for orthogonality to a subspace 
  with respect to the Ky Fan $p$-$k$ norm. 
   To do so, we recall the following well known results from subdifferential calculus.  \begin{pro}\cite{hiriartREalfundamentals,PriGro_thesis}\label{basic_minma_subd} Let $\mathcal{X}$ be a normed space. A continuous convex function $f:\mathcal{X}\to \R$ attains its minima at $a\in\mathcal{X}$ if and only if $0\in \partial f(a).$\end{pro}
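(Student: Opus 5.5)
We prove both implications of Proposition \ref{basic_minma_subd} directly from the definition of the subdifferential, with $\psi$ ranging over continuous linear (real-part) functionals on $\mathcal{X}$. By definition,
$$\partial f(a)=\{\psi\in\mathcal{X}^*:\Re\,\psi(x-a)\leq f(x)-f(a)\ \text{for all } x\in\mathcal{X}\}.$$

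\emph{Sufficiency.} Suppose $0\in\partial f(a)$. Taking $\psi=0$ in the defining inequality gives $0\leq f(x)-f(a)$ for every $x\in\mathcal{X}$. Hence $f(a)\leq f(x)$ for all $x$, so $a$ is a global minimizer. Neither convexity nor continuity is needed here.

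\emph{Necessity.} Suppose $f(a)=\min_{x\in\mathcal{X}}f(x)$. Then for the zero functional,
$$\Re\,0(x-a)=0\leq f(x)-f(a)\quad\text{for all } x\in\mathcal{X}.$$
The zero functional lies in $\mathcal{X}^*$, so $0\in\partial f(a)$.

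Thus $f$ attains its minimum at $a$ if and only if $0\in\partial f(a)$. Both directions reduce to one line once the definition is written with the zero functional substituted, so there is no real obstacle.

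The hypotheses of continuity and convexity are used only implicitly. They guarantee that $\partial f(a)$ is nonempty and that the subdifferential is the right object to use in later applications, for instance together with Theorem \ref{subdif_kyFan KP} and Proposition \ref{supconsubdif}. They play no role in this equivalence itself.

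The only point to check is the identification of functionals with matrices via $\psi(X)=\tr(G^*X)$ when $\mathcal{X}=\M_{m\times n}(\C)$. Under this identification the zero functional corresponds to the zero matrix. So in the matrix setting the statement reads: $0\in\partial f(a)$ as a subset of $\M_{m\times n}(\C)$.
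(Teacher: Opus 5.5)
Your proof is correct and is the standard textbook argument behind the cited result; the paper gives no proof of its own, only references, and substituting $\psi=0$ into the defining inequality of $\partial f(a)$ gives exactly the global-minimum condition $f(a)\leq f(x)$ for all $x\in\mathcal{X}$. Your remark that continuity and convexity play no role in the equivalence is also accurate: convexity would matter only if ``attains its minima'' were read as a \emph{local} minimum, since for convex $f$ a local minimum is global, but the paper uses the statement for global minima.
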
 \begin{pro}\cite{hiriartREalfundamentals,PriGro_thesis}\label{subdif_affine}	Let $\mathcal{X}$ and $\mathcal{Y}$ be Banach spaces. Let $S:\mathcal{X} \to \mathcal{Y}$ be a bounded linear map, and let $L:\mathcal{X} \to \mathcal{Y}$ be the affine map defined as $L(x) = S(x) + y_0$, where $y_0 \in \mathcal{Y}$. If $g:\mathcal{Y} \to \mathbb{R}$ is a continuous convex function, then 	$$\partial(g \circ L)(a) = S^* \partial g(L(a)) \,\, \text{for all } a \in \mathcal{X},$$ 	where $S^*$ denotes the adjoint of $S$, either real or complex, depending on whether $\mathcal{X}$ and $\mathcal{Y}$ are real or complex Banach spaces.\end{pro}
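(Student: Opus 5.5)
The statement to prove is Proposition~\ref{subdif_affine}. I would prove the two inclusions separately, working throughout with real parts of functionals. A complex-linear functional $\psi$ is determined by $\Re\,\psi$ through $\psi(y)=\Re\,\psi(y)-i\,\Re\,\psi(iy)$, so the complex case reduces to the real one as long as $S$ is complex linear.

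\emph{The inclusion $S^*\partial g(L(a))\subseteq\partial(g\circ L)(a)$.} This is immediate. Let $\psi\in\partial g(L(a))$ and $x\in\mathcal X$. Since $L(x)-L(a)=S(x-a)$,
$$\Re\,(S^*\psi)(x-a)=\Re\,\psi\big(L(x)-L(a)\big)\le g(L(x))-g(L(a)).$$
Hence $S^*\psi\in\partial(g\circ L)(a)$.

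\emph{The inclusion $\partial(g\circ L)(a)\subseteq S^*\partial g(L(a))$.} This is the real content, and I would prove it with directional derivatives and the Hahn--Banach theorem.

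First, set $b=L(a)$. For a continuous convex $h$, the one-sided directional derivative $h'(b;d)=\lim_{t\to0^+}\frac{h(b+td)-h(b)}{t}$ exists. It is a sublinear function of $d$. Since a continuous convex function is locally Lipschitz, $h'(b;\cdot)$ is also Lipschitz, hence continuous. Moreover, $\varphi\in\partial h(b)$ if and only if $\Re\,\varphi(d)\le h'(b;d)$ for all $d$. The forward direction comes from dividing the subgradient inequality by $t$. The converse uses the monotonicity of difference quotients of convex functions.

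Second, note that $(g\circ L)'(a;d)=g'(b;Sd)$ directly from the definition. So fix $\varphi\in\partial(g\circ L)(a)$; then $\Re\,\varphi(d)\le g'(b;Sd)$ for every $d\in\mathcal X$.

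Third, define a real-linear functional $\ell$ on the range of $S$ by $\ell(Sd)=\Re\,\varphi(d)$. It is well defined: if $Sd=0$, then $\Re\,\varphi(\pm d)\le g'(b;0)=0$, so $\Re\,\varphi(d)=0$. By construction, $\ell\le g'(b;\cdot)$ on the range of $S$.

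Fourth, apply the Hahn--Banach theorem to extend $\ell$ to a real-linear functional $\tilde\ell$ on $\mathcal Y$ with $\tilde\ell\le g'(b;\cdot)$ everywhere. Domination by a continuous sublinear function gives $|\tilde\ell(y)|\le\max\big(g'(b;y),g'(b;-y)\big)$, so $\tilde\ell$ is bounded. Let $\psi$ be the associated complex functional $\psi(y)=\tilde\ell(y)-i\,\tilde\ell(iy)$.

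Fifth, check the conclusion. By the first step, $\psi\in\partial g(b)$. Also $\Re\,(S^*\psi)=\Re\,\varphi$, and two complex-linear functionals with equal real parts are equal, so $S^*\psi=\varphi$.

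I expect the main obstacle to be the Hahn--Banach step. Two things need care there: the extension must be continuous, which rests on the local Lipschitz property of $g$ and hence of $g'(b;\cdot)$, and the directional-derivative characterization of $\partial g(b)$ must be established. In the finite-dimensional setting where this proposition is used in the paper, continuity is automatic, and the remaining checks are routine.
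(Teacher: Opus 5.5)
Your proof is correct. The paper does not prove this proposition; it quotes it from \cite{hiriartREalfundamentals,PriGro_thesis}, so the comparison is with the standard argument in those sources, which takes a different route after a shared first step.

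\textbf{Common starting point.} The finite-dimensional proof in Hiriart-Urruty--Lemar\'echal begins, as you do, with the identity $(g\circ L)'(a;d)=g'(L(a);Sd)$.

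\textbf{The reference's route.} It then works with support functions. By the max formula, $h'(x;\cdot)$ is the support function of $\partial h(x)$. The support function of $S^*\partial g(b)$ at $d$ equals that of $\partial g(b)$ at $Sd$. Hence $\partial(g\circ L)(a)$ and $S^*\partial g(b)$ are closed convex sets with the same support function, and so they coincide. Once the support-function calculus is in place, this is shorter. It needs two further facts:
\begin{enumerate}
\item the full max formula, whose nontrivial half is itself a Hahn--Banach argument;
\item closedness of $S^*\partial g(b)$. In finite dimensions this follows from compactness of $\partial g(b)$. In a Banach space it needs weak$^*$ compactness of $\partial g(b)$ and weak$^*$ continuity of $S^*$.
\end{enumerate}

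\textbf{What your route buys.} Extending $\ell$ from the range of $S$ avoids both facts. You only use the easy half of the characterization $\partial h(b)=\{\varphi:\Re\,\varphi\le h'(b;\cdot)\}$, and you construct the preimage $\psi$ explicitly. The argument works verbatim in any normed space, since completeness is never used. It also handles the complex case cleanly through real parts.

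\textbf{A small simplification.} You do not need the local Lipschitz property to bound $\tilde\ell$. If $g\le g(b)+1$ on the ball of radius $\delta$ about $b$, monotonicity of difference quotients already gives $g'(b;y)\le \|y\|/\delta$. Hence $|\tilde\ell(y)|\le\|y\|/\delta$.
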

  \begin{pro}\cite{hiriartREalfundamentals,PriGro_thesis}\label{sum}
  	Let $f_1, f_2$ be two continuous convex functions from a Banach space $X$ to another Banach space $Y$. Then 
  	$$\partial(f_1+f_2)(x)=\partial f_1(x)+\partial f_2(x)\,\, \text{ for all }x\in\mathcal{X}.$$
  \end{pro}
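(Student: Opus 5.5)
The inclusion $\partial f_1(x)+\partial f_2(x)\subseteq\partial(f_1+f_2)(x)$ is immediate: if $\psi_1\in\partial f_1(x)$ and $\psi_2\in\partial f_2(x)$, then adding the two inequalities $\Re\,\psi_i(y-x)\le f_i(y)-f_i(x)$ gives the subgradient inequality for $\psi_1+\psi_2$ and $f_1+f_2$. For the reverse inclusion I would use the standard Moreau--Rockafellar separation argument. This is where continuity is needed.

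\textbf{Reduction to the real case.} I would work in $\mathcal{X}$ regarded as a real Banach space. A complex functional $\psi$ is determined by its real part $\ell=\Re\,\psi$ through $\psi(y)=\ell(y)-i\,\ell(iy)$. Both the hypothesis and the conclusion only involve $\Re\,\psi$. So it suffices to split real continuous linear functionals and then complexify each piece by this formula.

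\textbf{Setting up two disjoint convex sets.} Fix $\psi\in\partial(f_1+f_2)(x)$ with $\ell=\Re\,\psi$, and put $g(y)=f_1(y)-f_1(x)-\ell(y-x)$, which is continuous and convex. The hypothesis says exactly that $g(y)\ge -(f_2(y)-f_2(x))$ for all $y$. In $\mathcal{X}\times\R$ consider
\[
C_1=\{(y,t): t>g(y)\},\qquad C_2=\{(y,t): t\le f_2(x)-f_2(y)\}.
\]
Both sets are convex, since $g$ is convex and $f_2(x)-f_2(\cdot)$ is concave. They are disjoint by the hypothesis. The set $C_1$ is open because $g$ is continuous, and it is nonempty since $(x,1)\in C_1$. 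By the Hahn--Banach separation theorem there is a nonzero pair $(\varphi,\alpha)\in\mathcal{X}^*\times\R$ and a constant $c$ such that
\[
\varphi(y)+\alpha t> c\ge \varphi(z)+\alpha s \quad\text{for all }(y,t)\in C_1,\ (z,s)\in C_2 .
\]

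\textbf{Main obstacle: the sign of $\alpha$.} The key step is to show that $\alpha>0$.
\begin{itemize}
\item Letting $t\to\infty$ in $C_1$ gives $\alpha\ge 0$.
\item If $\alpha=0$, then $\varphi(y)>c\ge\varphi(z)$ for all $y$ in the projection of $C_1$ and all $z$ in the projection of $C_2$. Both projections are all of $\mathcal{X}$, because $f_1,f_2$ are finite everywhere. This forces $\varphi=0$, contradicting $(\varphi,\alpha)\neq 0$.
\end{itemize}
Note that the openness of $C_1$, which rests on the continuity of $f_1$, is exactly what makes the separation theorem applicable in this form.

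\textbf{Extracting the two subgradients.} Divide by $\alpha$ and set $\eta=-\varphi/\alpha$. Using $(x,0)\in C_2$ and letting $t\downarrow g(x)=0$ in $C_1$ pins the constant down to $c/\alpha=\varphi(x)/\alpha$. This yields two inequalities:
\[
g(y)\ge \eta(y-x)\quad\text{for all } y,\qquad f_2(x)-f_2(z)\le \eta(z-x)\quad\text{for all } z .
\]
The first says $\ell+\eta$ is a real subgradient of $f_1$ at $x$. The second says $-\eta$ is a real subgradient of $f_2$ at $x$. Since their sum is $\ell$, complexifying both via the reduction step gives $\psi=\psi_1+\psi_2$ with $\psi_i\in\partial f_i(x)$, which completes the reverse inclusion.
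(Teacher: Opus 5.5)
Your proof is correct and complete. The strict epigraph $C_1$ is open because $f_1$ is continuous, disjointness of $C_1$ and $C_2$ is exactly the hypothesis, and $\alpha>0$ follows as you say. The normalization $c=\varphi(x)$ then gives the two subgradient inequalities with the right signs, and the complexification recovers $\psi=\psi_1+\psi_2$. (The codomain ``Banach space $Y$'' in the statement is a slip: the $f_i$ must be real-valued for $\partial f_i$ to make sense, as you tacitly assume.)

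The paper itself does not prove this proposition; it only cites \cite{hiriartREalfundamentals,PriGro_thesis}. The textbook argument in \cite{hiriartREalfundamentals}, for finite convex functions on $\R^n$, takes a different route:
\begin{enumerate}
\item directional derivatives are additive, $(f_1+f_2)'(x;\cdot)=f_1'(x;\cdot)+f_2'(x;\cdot)$;
\item the support function of $\partial f(x)$ is $f'(x;\cdot)$;
\item $\partial f_1(x)+\partial f_2(x)$ is a compact convex set whose support function is the sum of the two support functions, so it coincides with $\partial(f_1+f_2)(x)$.
\end{enumerate}
That route is shorter once the max formula $f'(x;d)=\max_{\xi\in\partial f(x)}\langle\xi,d\rangle$ is available. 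But the max formula is itself a separation result. In infinite dimensions it also needs weak$^*$-compactness of both subdifferentials, hence continuity of both $f_i$, plus a weak$^*$ separation step.

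Your separation argument applies Hahn--Banach once, directly. It needs continuity of only $f_1$, with $f_2$ merely finite everywhere, and does not use completeness. Your real-part reduction is also worth keeping. The paper applies the rule on $\C$ and on $\M_{m\times n}(\C)$ with the $\Re\,\psi$ definition of the subdifferential, and the real finite-dimensional reference does not address that point.
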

  A proof of the next corollary follows using \cite{wojcik22}. We give an alternate proof using subdifferential calculus.
  \begin{corollary}\label{theorem_com_orth_epsi_kp}
  	Let $A,B\in \M_{m\times n}(\C).$ Let $2\leq p<\infty.$ Let $\varepsilon\in [0,1).$ Then \\$A\perp_{\|\cdot\|_{(p,k)}}^\varepsilon B$ if and only if there exist $k$ orthonormal vectors $v_1,v_2,\ldots,v_k$ satisfying $$A^*Av_i=\sigma_i^2(A)v_i \,\, \text{ for all } 1\leq i \leq k$$ and there exists $z_0\in \C,$ $|z_0|\leq 1$ such that $$\frac{1}{\|A\|_{(p,k)}^{p-1}}\sum_{i=1}^{k}\langle (A^*A)^\frac{p-2}{2}A^*Bv_i,v_i\rangle+z_0\varepsilon\|B\|_{(p,k)}=0.$$
  \end{corollary}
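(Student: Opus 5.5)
Our plan is to reduce $\varepsilon$-Birkhoff orthogonality to a minimization problem over $\lambda\in\C$ and then apply the subdifferential calculus recalled above (Propositions \ref{basic_minma_subd}, \ref{subdif_affine}, \ref{sum}) together with Theorem \ref{subdif_kyFan KP}.

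\emph{Reformulation.} The case $A=0$ is trivial, so assume $A\neq 0$. Since $\|A+\lambda B\|\ge 0$ and $\|A\|\ge \|A\|-\varepsilon|\lambda|\|B\|$, the defining inequality is equivalent to
\[
\|A+\lambda B\|_{(p,k)}\ \geq\ \|A\|_{(p,k)}-\varepsilon|\lambda|\,\|B\|_{(p,k)}\quad\text{for all }\lambda\in\C .
\]
In one direction, $\|A+\lambda B\|^2\ge \|A\|^2-2\varepsilon\|A\||\lambda|\|B\|$ gives the linear inequality by homogeneity: replace $\lambda$ by $t\lambda$, let $t\to0^+$, and use convexity of $t\mapsto\|A+t\lambda B\|$. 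More cleanly, the standard result of Chmieli\'nski (also used in \cite{wojcik22}) says that $A\perp^\varepsilon B$ holds if and only if $\D^+\|A\|(\lambda B)\ge -\varepsilon|\lambda|\|B\|$ for all $\lambda$, or equivalently iff the linear inequality above holds. We will invoke this and work with the convex function
\[
g(\lambda)=\|A+\lambda B\|_{(p,k)}+\varepsilon\|B\|_{(p,k)}|\lambda| ,
\]
which is viewed on $\C\cong\R^2$. Then $A\perp^\varepsilon B$ if and only if $g$ attains its minimum at $\lambda=0$, because $g(0)=\|A\|$ and $g(\lambda)\ge g(0)$ is exactly the linear inequality.

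\emph{Subdifferential computation.} By Proposition \ref{basic_minma_subd}, the condition is $0\in\partial g(0)$. Proposition \ref{sum} gives $\partial g(0)=\partial(\|A+\cdot B\|_{(p,k)})(0)+\varepsilon\|B\|_{(p,k)}\,\partial|\cdot|(0)$. Here $\partial|\cdot|(0)$ is the closed unit disc $\{z_0:|z_0|\le 1\}$. For the first term we apply Proposition \ref{subdif_affine} with $S\lambda=\lambda B$ and $y_0=A$. The adjoint is $S^*G=\tr(B^*G)$, or its conjugate depending on the pairing convention, so the first term equals $\{\tr(B^*G):G\in\partial\|A\|_{(p,k)}\}$. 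Substituting the extreme points from Theorem \ref{subdif_kyFan KP}, namely $G=\|A\|^{1-p}A(A^*A)^{\frac{p-2}{2}}\sum v_iv_i^*$, and using that $A$ commutes appropriately so that $A(A^*A)^{\frac{p-2}{2}}=(AA^*)^{\frac{p-2}{2}}A$, we get
\[
\tr(B^*G)=\|A\|^{1-p}\sum_i\langle A(A^*A)^{\frac{p-2}{2}}v_i,Bv_i\rangle .
\]
Taking the conjugate, which is harmless because $z_0$ ranges over a disc, this becomes $\|A\|^{1-p}\sum_i\langle (A^*A)^{\frac{p-2}{2}}A^*Bv_i,v_i\rangle$.

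\emph{Main obstacle: convex hull.} Theorem \ref{subdif_kyFan KP} describes $\partial\|A\|_{(p,k)}$ as a convex hull, whereas the statement asks for a single orthonormal family $v_1,\dots,v_k$. So $0\in\partial g(0)$ a priori gives $0=\sum_j t_j w_j+z_0\varepsilon\|B\|$, where each $w_j$ comes from a single family. We must show that the set $W=\{\sum_i\langle (A^*A)^{\frac{p-2}{2}}A^*Bv_i,v_i\rangle\}$ of values over admissible families is already convex.

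The admissible families are as follows. For each distinct singular value among $\sigma_1,\dots,\sigma_k$, the $v_i$ span a fixed eigenspace, except for the last value $\sigma_k$, where they form an $r$-dimensional subspace of the eigenspace $E$ of $\sigma_k^2$. On the fixed blocks the contribution is constant. On the last block, where $(A^*A)^{\frac{p-2}{2}}=\sigma_k^{p-2}I$ on $E$, the contribution is $\sigma_k^{p-2}\tr(P_E A^*B P_E\text{ compressed to an }r\text{-plane})$. This is the $r$-th generalized numerical range of the compression of $A^*B$ to $E$, which is convex by the Berger/Westwick theorem on the $k$-numerical range. Hence $W$ is convex and the convex combination collapses to a single family.

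The case $\sigma_k=0$ needs a check: there the block contributes $0$ for $p>2$, while for $p=2$ the same argument applies. If invoking Westwick is deemed too heavy, the alternative is to argue directly via the equivalence with the directional derivative in the preceding Corollary. That corollary already expresses $\D^+$ as a maximum over single families, and the condition $\min_{|\lambda|=1}\D^+\|A\|(\lambda B)\ge-\varepsilon\|B\|$, combined with a minimax argument, again requires the same convexity.
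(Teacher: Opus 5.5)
Your proposal is correct and follows essentially the same route as the paper. Both arguments write the orthogonality as minimality at $\lambda=0$ of a convex function of $\lambda$, apply the sum and affine-composition rules together with Theorem \ref{subdif_kyFan KP}, and collapse the convex hull using convexity of the image of the extreme-point set $\mathcal{K}$. The differences are minor. The paper minimizes $\|A+\lambda B\|_{(p,k)}^2+2\varepsilon|\lambda|\|A\|_{(p,k)}\|B\|_{(p,k)}$ directly, so it needs no passage from the squared inequality to the linear one. It also cites \cite[Lemma 3.1]{KyFan_ortho_Grover} for the convexity, which you instead derive from Berger's theorem on the $k$-numerical range of the compression of $A^*B$ to the $\sigma_k^2(A)$-eigenspace.
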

  
  \begin{proof}
  	Consider the linear map $S:\C \to \M_{m\times n}(\C)$ defined by $S(\lambda) = \lambda B$, and the continuous affine map $L:\C \to \M_{m\times n}(\C)$ defined by $L(\lambda) = S(\lambda) + A$. Additionally, consider the continuous convex function $f_3:\M_{m\times n}(\C) \to \mathbb{R}$ defined by $f_3(X) = \|X\|_{(p,k)}^2$, and the function $f_4:\C \to \mathbb{R}^+$ defined by $f_4(\lambda) = 2\varepsilon|\lambda|\|A\|_{(p,k)}\,\|B\|_{(p,k)}$. Since $A\perp_{\|\cdot\|_{(p,k)}}^\varepsilon B$, it follows that $f_3\,\circ L+f_4$ attains its minimum at zero. Then, by Proposition \ref{basic_minma_subd}, Proposition \ref{subdif_affine} and  Proposition \ref{sum}, 
  	
  	we obtain $A\perp_{\|\cdot\|_{(p,k)}}^\varepsilon B$ if and only if

  	\begin{equation}\label{iffsubmin1} 
  		\begin{aligned}
  			0 & \in  \partial\big( f_3\, \circ \,L+f_4 \big)(0)\\
  			&= S^*\partial f_3(A)+\partial f_4(0)\\
  			&= S^*\partial \|\cdot\|_{(p,k)}^2(A)+ \{2\|A\|_{(p,k)}\|B\|_{(p,k)}z:|z|\leq 1 \}\\
  			&= 2 \|A\|_{(p,k)}\, S^*\partial (\|A\|)+\{2\|A\|_{(p,k)}\|B\|_{(p,k)}z:|z|\leq 1 \}\\
  			&= 2 \|A\|_{(p,k)}\, \operatorname{conv}\{S^*(\mathcal{K})\} +\{2\|A\|_{(p,k)}\|B\|_{(p,k)}z:|z|\leq 1 \},\\
  		\end{aligned} 
  	\end{equation} 
  	where $\mathcal{K}$ is the set defined in Remark \ref{remark_ext}. By \cite[Lemma 3.1]{KyFan_ortho_Grover}, we get the convexity of $S^*(\mathcal{K}).$ Thus, by \eqref{iffsubmin1}, we get $$A\perp_{\|\cdot\|_{(p,k)}}^\varepsilon B\text{ if and only if }
  	0\in 2\|A\|_{(p,k)}\{S^*(\mathcal{K})\}+\{2\|A\|_{(p,k)}\|B\|_{(p,k)}z:|z|\leq 1 \}.$$ This implies our required result.
  \end{proof}
  
    \begin{remark}
  	The concept of real $\varepsilon$-orthogonality is also presented in \cite{epsilon_app}. A proof analogous to that of Corollary \ref{theorem_com_orth_epsi_kp} gives the following characterization for the real $\varepsilon$-orthogonality.
  	Let $2\leq p<\infty.$ Let $\varepsilon\in [0,1).$ Then \begin{equation}\label{re_cond_equ}
  		\|A+t B\|_{(p,k)}^2\geq \|A\|_{(p,k)}^2-2\varepsilon\|A\|_{(p,k)}\|tB\|_{(p,k)}\, \text{ for all }t \in \R
  	\end{equation} if and only if there exist $k$ orthonormal vectors $v_1,v_2,\ldots,v_k$ satisfying \begin{equation*}\label{singularvector for A}
  		A^*Av_i=\sigma_i^2(A)v_i \text{ for all } 1\leq i \leq k\end{equation*}
  	and there exists $t_0\in \R,$ $|t_0|\leq 1$ such that 
  	\begin{equation*}
  		\frac{1}{\|A\|_{(p,k)}^{p-1}}\sum_{i=1}^{k}\Re\langle (A^*A)^{p-2}A^*Bv_i,v_i\rangle+t_0\varepsilon\|B\|_{(p,k)}=0.
  	\end{equation*}
  	
  \end{remark}
  
  For $\varepsilon=0$, we get a characterization for Birkhoff-James orthogonality with respect to Ky Fan $p$-$k$ norms as follows.
  \begin{corollary}\label{BirkChara_2}
  	Let $A,B\in \M_{m\times n}(\C).$ Let $2\leq p<\infty$ and $1\leq k\leq n$. Then \begin{equation*}\label{bir_Orth_com}
  		\|A+\lambda B\|_{(p,k)}\geq \|A\|_{(p,k)}\quad \text{ for all }\lambda\in \C
  	\end{equation*} if and only if there exist $k$ orthonormal vectors $v_1,v_2,\ldots,v_k$ satisfying $$A^*Av_i=\sigma_i^2(A)v_i \text{ for all } 1\leq i \leq k$$ such that $$\sum_{i=1}^{k}\langle (A^*A)^\frac{p-2}{2}A^*Bv_i,v_i\rangle=0.$$
  \end{corollary}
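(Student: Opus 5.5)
The plan is to specialize Corollary \ref{theorem_com_orth_epsi_kp} to $\varepsilon=0$. We may assume $A\neq 0$. If $A=0$, the inequality $\|\lambda B\|_{(p,k)}\geq 0$ is trivial, and the right-hand condition also holds, since for any orthonormal $v_i$ the left side of the equation vanishes because of the factor $A^*$. We may also assume $k\leq n_0$, since otherwise $\|\cdot\|_{(p,k)}$ is not defined.

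For $\varepsilon=0$, the definition of $\varepsilon$-Birkhoff orthogonality reads $\|A+\lambda B\|_{(p,k)}^2\geq \|A\|_{(p,k)}^2$ for all $\lambda\in\C$. Since both norms are nonnegative, this is equivalent to $\|A+\lambda B\|_{(p,k)}\geq\|A\|_{(p,k)}$ for all $\lambda\in\C$. Hence $A\perp^0_{\|\cdot\|_{(p,k)}}B$ is exactly the hypothesis of the present statement.

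Next, apply Corollary \ref{theorem_com_orth_epsi_kp} with $\varepsilon=0$. The term $z_0\varepsilon\|B\|_{(p,k)}$ vanishes for every $z_0$ with $|z_0|\leq 1$, so the existential quantifier over $z_0$ becomes vacuous. The condition therefore reduces to the existence of orthonormal $v_1,\ldots,v_k$ with $A^*Av_i=\sigma_i^2(A)v_i$ and
\[
\frac{1}{\|A\|_{(p,k)}^{p-1}}\sum_{i=1}^{k}\langle (A^*A)^{\frac{p-2}{2}}A^*Bv_i,v_i\rangle=0 .
\]
Because $A\neq0$, we have $\|A\|_{(p,k)}>0$. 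Multiplying by $\|A\|_{(p,k)}^{p-1}$ gives exactly the stated condition, and this proves both directions.

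There is no real obstacle here. The only point needing care is that Corollary \ref{theorem_com_orth_epsi_kp} relies on the convexity of $S^*(\mathcal{K})$, which lets the convex hull be replaced by a single element of $\mathcal{K}$. That input is already part of the cited corollary. If one prefers to avoid it, the same conclusion follows directly from Propositions \ref{basic_minma_subd} and \ref{subdif_affine}: $0\in S^*\partial\|\cdot\|_{(p,k)}(A)$, together with Theorem \ref{subdif_kyFan KP} and \cite[Lemma 3.1]{KyFan_ortho_Grover}.
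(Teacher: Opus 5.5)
Your proposal is correct and follows the paper's approach: the paper obtains this corollary simply by setting $\varepsilon=0$ in Corollary \ref{theorem_com_orth_epsi_kp}. Your separate handling of $A=0$ and your clearing of the factor $\|A\|_{(p,k)}^{p-1}$ are the only details the paper leaves implicit, and you handle both correctly.
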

  
  We point out that for $p=1$, a characterization of Birkhoff-James orthogonality is given in \cite{KyFan_ortho_Grover} for the Ky Fan $k$ norms. For $k=n$ and $1<p<\infty$, these have been given in \cite{bhatia1999orthogonality}. For $k=n$ and $p=1$, a characterization was given in \cite{lischneider}. 
  
   The concept of norm parallelism is closely related to Birkhoff-James orthogonality.
  A matrix $A$ is said to be norm-parallel to $B$ if there exists $\lambda\in\C,\,|\lambda|=1$ such that 
  $\|A+\lambda B\|_{(p,k)}=\|A\|_{(p,k)}+\|B\|_{(p,k)}$ (see \cite{RankoneSeddik}). It is denoted as $A\parallel B.$ By~\cite[Theorem 2.4]{Zamani2016},
  $A\parallel B$	if and only if $A \perp_B(\| B\|_{(p,k)}A+\alpha \|A\|_{(p,k)} B),$ for some $\alpha \in\C,|\alpha|=1.$ Thus, by Corollary \ref{BirkChara_2}, we get the following characterization.
  
  \begin{corollary}
  	
  	Let $A,B\in\M_{m\times n}(\C).$ Then  $A\parallel B$ if and only if there exist $k$ orthonormal vectors $v_1,v_2,\ldots,v_k$ satisfying $$A^*Av_i=\sigma_i^2(A)v_i \text{ for all } 1\leq i \leq k$$ and a scalar $\lambda\in\C,\,|\lambda|=1$ such that $$\sum_{i=1}^{k}\langle (A^*A)^\frac{p-2}{2}A^*Bv_i,v_i\rangle=\lambda \|A\|_{(p,k)}^{(p-1)}\|B\|_{(p,k)}.$$
  \end{corollary}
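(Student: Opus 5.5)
The plan is to reduce the statement to Corollary \ref{BirkChara_2} through the equivalence from \cite[Theorem 2.4]{Zamani2016}: $A\parallel B$ if and only if $A\perp_B(\|B\|_{(p,k)}A+\alpha\|A\|_{(p,k)}B)$ for some $\alpha\in\C$ with $|\alpha|=1$. The degenerate cases come first. If $A=0$ or $B=0$, then $A\parallel B$ holds trivially, and the displayed condition also holds trivially, since both sides vanish (for $A=0$ we read the left side as $0$ and take any $\lambda$). So assume $A,B\neq 0$.

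Set $C=\|B\|_{(p,k)}A+\alpha\|A\|_{(p,k)}B$ and apply Corollary \ref{BirkChara_2} to the pair $(A,C)$. This says $A\perp_B C$ exactly when there are orthonormal $v_1,\ldots,v_k$ with $A^*Av_i=\sigma_i^2(A)v_i$ and
\[
\sum_{i=1}^k\langle (A^*A)^{\frac{p-2}{2}}A^*Cv_i,v_i\rangle=0 .
\]
Expand this by linearity in $C$. The $A$-part gives
\[
\|B\|_{(p,k)}\sum_{i=1}^k\langle (A^*A)^{\frac{p-2}{2}}A^*Av_i,v_i\rangle=\|B\|_{(p,k)}\sum_{i=1}^k\sigma_i^{p}(A)=\|B\|_{(p,k)}\|A\|_{(p,k)}^p .
\]
This uses that each $v_i$ is an eigenvector of $A^*A$, so $(A^*A)^{\frac{p-2}{2}}A^*Av_i=\sigma_i^p(A)v_i$. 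The condition therefore becomes
\[
\alpha\|A\|_{(p,k)}\sum_{i=1}^k\langle (A^*A)^{\frac{p-2}{2}}A^*Bv_i,v_i\rangle=-\|B\|_{(p,k)}\|A\|_{(p,k)}^p .
\]
Divide by $\alpha\|A\|_{(p,k)}\neq0$ and put $\lambda=-\bar\alpha$, which satisfies $|\lambda|=1$. The condition then reads
\[
\sum_{i=1}^k\langle (A^*A)^{\frac{p-2}{2}}A^*Bv_i,v_i\rangle=\lambda\|A\|_{(p,k)}^{p-1}\|B\|_{(p,k)} .
\]
This is the claimed identity. Every step is reversible: given $\lambda$, take $\alpha=-\bar\lambda$. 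Hence both directions follow.

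The only real care is needed in the inner product computation, namely confirming that the diagonal term collapses exactly to $\|A\|^p_{(p,k)}$ for every admissible choice of $v_i$. This relies on the eigenvector property, including the case of repeated singular values. I also need to respect the paper's standing assumption $2\le p<\infty$, which Corollary \ref{BirkChara_2} requires. The sign convention for $\lambda$ does not matter, since $\lambda$ ranges over the whole unit circle.
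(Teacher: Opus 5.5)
Your proposal is correct and follows the paper's route exactly. The paper proves this corollary by invoking \cite[Theorem 2.4]{Zamani2016} to rewrite $A\parallel B$ as $A\perp_B(\|B\|_{(p,k)}A+\alpha\|A\|_{(p,k)}B)$ for some unimodular $\alpha$, and then applying Corollary~\ref{BirkChara_2}. You additionally spell out what the paper leaves implicit, and all of it is sound: the collapse $(A^*A)^{\frac{p-2}{2}}A^*Av_i=\sigma_i^p(A)v_i$, the substitution $\lambda=-\bar\alpha$, the degenerate cases $A=0$ or $B=0$, and the standing assumption $2\leq p<\infty$.
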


 We now turn our attention to orthogonality to a subspace with respect to the Ky Fan $p$-$k$ norms and provide a necessary condition.
  \begin{theorem}\label{subspace_direct1}
  	Let $2\leq p\leq \infty,$ and $1\leq k\leq n_0.$ Let $A \in \mathbb{M}_{m \times n}(\mathbb{C})$. Let $\mathcal{M}$ be a subspace of $\mathbb{M}_{m \times n}(\mathbb{C})$. If $$\|A\|_{(p,k)} \leq \|A + B\|_{(p,k)} \quad \text{for all } B \in \mathcal{M},$$ then there exist density matrices $T_1, \ldots, T_k$ such that, for each $1 \leq i \leq k$, $A^* A T_i = \sigma_i^2(A) T_i,$ and $
  	\frac{1}{\|A\|_{(p,k)}^{p-1}} A (A^* A)^{\frac{p-2}{2}} \sum\limits_{i=1}^k T_i \in \mathcal{M}^\perp.$
  	
  \end{theorem}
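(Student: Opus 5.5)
The plan is to turn the orthogonality hypothesis into a subdifferential condition, using the calculus recalled above, and then read off the density matrices from the convex hull in Theorem \ref{subdif_kyFan KP}. Assume $A\neq 0$; otherwise the claim is trivial with any admissible choice. Consider the convex function $g(B)=\|A+B\|_{(p,k)}$ restricted to $\Mc$. The hypothesis says $g|_{\Mc}$ attains its minimum at $0$. Let $S:\Mc\to\M_{m\times n}(\C)$ be the inclusion and $L(B)=S(B)+A$. By Proposition \ref{basic_minma_subd} and Proposition \ref{subdif_affine}, $0\in S^*\partial\|A\|_{(p,k)}$. Since $S^*$ is the orthogonal projection onto $\Mc$ with respect to the (real part of the) trace inner product, this means there is $G\in\partial\|A\|_{(p,k)}$ with $\Re\,\tr(G^*B)=0$ for all $B\in\Mc$. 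For a complex subspace we replace $B$ by $iB$, which gives $\tr(G^*B)=0$, so $G\in\Mc^\perp$.

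Next, take $2\le p<\infty$ and use Theorem \ref{subdif_kyFan KP} to write
\[
G=\sum_j \lambda_j \frac{1}{\|A\|_{(p,k)}^{p-1}}A(A^*A)^{\frac{p-2}{2}}\sum_{i=1}^k v_i^{(j)}v_i^{(j)*},
\]
a finite convex combination, where $\{v_i^{(j)}\}_i$ are orthonormal and $A^*Av_i^{(j)}=\sigma_i^2(A)v_i^{(j)}$. Set $T_i=\sum_j\lambda_j v_i^{(j)}v_i^{(j)*}$. Each $T_i$ is positive semidefinite with trace $\sum_j\lambda_j=1$, so it is a density matrix. Since every $v_i^{(j)}$ is an eigenvector of $A^*A$ for $\sigma_i^2(A)$, we get $A^*AT_i=\sigma_i^2(A)T_i$. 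The matrix $(A^*A)^{\frac{p-2}{2}}$ stays to the left of $\sum_iT_i$, so linearity gives $G=\frac{1}{\|A\|_{(p,k)}^{p-1}}A(A^*A)^{\frac{p-2}{2}}\sum_iT_i\in\Mc^\perp$. This is the required conclusion. Regrouping by index $i$ is legitimate even though different $j$ use different eigenbases: the theorem only pairs $v_i$ with the eigenvalue $\sigma_i^2(A)$, and the trace and eigen conditions survive convex combination.

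The case $p=\infty$ is the main obstacle, because Theorem \ref{subdif_kyFan KP} does not cover it and $\|\cdot\|_{(\infty,k)}$ reduces to the spectral norm $\sigma_1$. The expression $(A^*A)^{\frac{p-2}{2}}/\|A\|^{p-1}$ must then be read as the limit, which picks out the top singular value. I would handle this case with the known subdifferential of the spectral norm, either from \eqref{subd_ziet_gauge1} or from Proposition \ref{supconsubdif} applied to $\sup_{\|u\|=\|w\|=1}\Re\langle Au,w\rangle$. This gives $G=\sigma_1(A)^{-1}A\,T$ with $T$ a density matrix supported on the top eigenspace of $A^*A$.

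Fitting this to the stated form needs care, because $\sigma_i(A)$ for $i\ge2$ may be smaller than $\sigma_1(A)$. I would interpret the statement for $p=\infty$ through the limiting normalization $\lim_{p\to\infty}\sigma_i^{p-2}/\|A\|_{(p,k)}^{p-1}$. This limit is nonzero only when $\sigma_i=\sigma_1$, so I would choose $T_i$ within the $\sigma_1$-eigenspace where needed. Alternatively, one can run a limiting argument $p\to\infty$ using compactness of density matrices and continuity of the norms. I would first try the direct spectral-norm computation and regard the precise convention at $p=\infty$ as the delicate point.
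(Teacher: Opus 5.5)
Your argument for $2\le p<\infty$ is the paper's proof. Minimality of $B\mapsto\|A+B\|_{(p,k)}$ on $\Mc$ at $0$ gives $0\in S^*\partial\|A\|_{(p,k)}$, Theorem \ref{subdif_kyFan KP} writes the resulting subgradient in $\Mc^\perp$ as a finite convex combination, and regrouping by the index $i$ produces the density matrices $T_i$. The paper's proof also covers only finite $p$, since that theorem is stated for $p<\infty$ and the displayed expression has no literal meaning at $p=\infty$. So your discussion of the case $p=\infty$ goes beyond what the paper does.
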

  \begin{proof}
  	Let $S_1 : \mathcal{M} \to \mathbb{M}_{m \times n}(\mathbb{C})$ denote the inclusion map, and let $L_1 : \mathcal{M} \to \mathbb{M}_{m \times n}(\mathbb{C})$ be a continuous affine map defined by $L_1(B) = S_1(B) + A$. Let $F : \mathbb{M}_{m \times n}(\mathbb{C}) \to \mathbb{R}$ be a continuous convex defined by $F(X) = \|X\|_{(p,k)}$. Since $\|A\|_{(p,k)} \leq \|A + B\|_{(p,k)}$ for all $B \in \mathcal{M}$, it follows that $F \circ L_1$ attains its minimum at zero. Then, by Propositions \ref{basic_minma_subd} and \ref{subdif_affine}, we get, $$\|A\|_{(p,k)} \leq \|A + B\|_{(p,k)} \, \text{ for all } B \in \mathcal{M}$$ if and only if
  	\begin{equation}\label{iffsubminimal1}
  			0 \in \partial (F \circ L_1)(0) = S_1^* \partial F(A) = S_1^* \partial (\| A\|_{(p,k)}).
  	\end{equation}
  	By Theorem \ref{subdif_kyFan KP}, there exist numbers $\alpha_1,\ldots,\alpha_{n_1}$ with $0\leq \alpha_j\leq 1,$ $\sum\limits_{j=1}^{n_1}\alpha_j=1,$ and for each $1\leq j\leq n_1$, there exist set of orthonormal vectors $\{v_1^{(j)},\ldots,v_k^{(j)}\}$ such that
  	\begin{equation}\label{singular_density1}A^*Av_{i}^{(j)}=\sigma_i^2(A)v_i^{(j)}\quad \text{ for all } i=1,\ldots,k,\text{  and }\end{equation}
  	\begin{equation}\label{S_1^* equation}
  		S_1^*\left(\frac{1}{\|A\|_{(p,k)}^{p-1}}A(A^*A)^\frac{p-2}{2}\sum_{j=1}^{n_1}\alpha_j\left(\sum_{i=1}^{k}v_i^{(j)}{v_i^{(j)}}^*\right)\right)=0.
  	\end{equation}
  	Let $T_i=\sum\limits_{j=1}^{n_1}\alpha_jv_i^{(j)}{v_i^{(j)}}^*.$ Then each $T_i$ is a density matrix and by \eqref{singular_density1}, we get $A^*AT_{i}=\sigma_i^2(A)T_i.$ Note that $S_1^*:\M_{m\times n}(\C)\to\Mc$ is the orthogonal projection onto $\Mc.$ Thus, by \eqref{S_1^* equation} we get 
  	$$\frac{1}{\|A\|_{(p,k)}^{p-1}}A(A^*A)^\frac{p-2}{2}\sum_{i=1}^{k}T_i\in\Mc^\perp.$$ This completes the proof.   \end{proof}
  	
  	Let $m_1\in\N.$ For $i=1,\dots,k,$ let 
  	\begin{equation}\label{subs_conv_eq1}T_i=\sum\limits_{j=1}^{m_1}\alpha_j\, v_i^{(j)}{v_i^{(j)}}^*
  	\end{equation} be a density matrix such that, for each fixed $j$, $\{v_1^{(j)},\dots,v_k^{(j)}\}$ is the set of orthonormal vectors satisfying
  	$$A^*A\,v_i^{(j)}=\sigma_i^2(A)\,v_i^{(j)} ,$$
  	where $\alpha_j\ge0$ and $\sum\limits_{j=1}^{m_1}\alpha_j=1.$ 
   In the following theorem, for the matrices $T_i$ defined in \eqref{subs_conv_eq1}, we obtain the converse of Theorem \ref{subspace_direct1}.
    Let $\|\cdot\|_{(p,k)}^*$ denote the dual norm of the Ky Fan $p$-$k$ norm.
   \begin{theorem}
   	Let $2\leq p\leq \infty,$ and $1\leq k\leq n_0.$ Let $A \in \mathbb{M}_{m \times n}(\mathbb{C})$. Let $\mathcal{M}$ be a subspace of $\mathbb{M}_{m \times n}(\mathbb{C})$. For $i=1,\ldots,k,$ let $T_i$ be the density matrices defined in \eqref{subs_conv_eq1} and $
   	\frac{1}{\|A\|_{(p,k)}^{p-1}} A (A^* A)^{\frac{p-2}{2}} \sum\limits_{i=1}^k T_i \in \mathcal{M}^\perp.$ Then $$\|A\|_{(p,k)} \leq \|A + B\|_{(p,k)} \quad \text{for all } B \in \mathcal{M}.$$
   \end{theorem}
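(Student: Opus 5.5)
The key observation is that the matrix $G:=\frac{1}{\|A\|_{(p,k)}^{p-1}} A (A^* A)^{\frac{p-2}{2}} \sum_{i=1}^k T_i$ is a convex combination of elements of the set $\mathcal{K}$ of Remark \ref{remark_ext}. So it lies in $\partial\|A\|_{(p,k)}$, and the conclusion should follow from the subgradient inequality. I will first dispose of the degenerate case $A=0$, where the conclusion is trivial. Then, for $A\neq 0$ and $2\le p<\infty$, I rewrite
\begin{equation*}
\sum_{i=1}^k T_i=\sum_{j=1}^{m_1}\alpha_j\Big(\sum_{i=1}^k v_i^{(j)}{v_i^{(j)}}^*\Big)
\end{equation*}
using \eqref{subs_conv_eq1}. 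Set $G_j:=\frac{1}{\|A\|_{(p,k)}^{p-1}}A(A^*A)^{\frac{p-2}{2}}\sum_{i=1}^k v_i^{(j)}{v_i^{(j)}}^*$. Linearity in the $T_i$ gives $G=\sum_j\alpha_j G_j$. For each fixed $j$ the vectors $v_1^{(j)},\dots,v_k^{(j)}$ are orthonormal with $A^*Av_i^{(j)}=\sigma_i^2(A)v_i^{(j)}$, so $G_j\in\mathcal K$. Theorem \ref{subdif_kyFan KP} then gives $G\in\operatorname{conv}\mathcal K=\partial\|A\|_{(p,k)}$.

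Next I apply the definition of the subdifferential via \eqref{eq 1.4.1}: for every $X$ we have $\Re\,\tr(G^*(X-A))\le \|X\|_{(p,k)}-\|A\|_{(p,k)}$. Taking $X=A+B$ with $B\in\Mc$ gives $\|A+B\|_{(p,k)}\ge \|A\|_{(p,k)}+\Re\,\tr(G^*B)$. Since $G\in\Mc^\perp$, we have $\tr(G^*B)=\langle B,G\rangle=0$, and the inequality follows. Equivalently, $S_1^*G=0$ gives $0\in S_1^*\partial\|A\|_{(p,k)}=\partial(F\circ L_1)(0)$, exactly as in the proof of Theorem \ref{subspace_direct1}, and then Proposition \ref{basic_minma_subd} applies. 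It is worth stressing that the explicit convex-combination structure \eqref{subs_conv_eq1} is what makes the converse work: an arbitrary density matrix $T_i$ with $A^*AT_i=\sigma_i^2(A)T_i$ need not produce an element of $\partial\|A\|_{(p,k)}$ when singular values are repeated. This is why the hypothesis fixes a common set of weights $\alpha_j$ and, for each $j$, a common orthonormal system across $i$.

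The remaining point is the endpoint $p=\infty$, which the statement includes even though Theorem \ref{subdif_kyFan KP} covers only $p<\infty$. This is the main obstacle. I would interpret $\|\cdot\|_{(\infty,k)}$ as the spectral norm $\sigma_1$ and the matrix $\|A\|^{1-p}A(A^*A)^{(p-2)/2}$ in the limiting sense. The first attempt would be a direct check: verify that each $G_j$ still satisfies $\|G_j\|^*\le1$ and $\Re\,\tr(G_j^*A)=\|A\|$. I would do this by computing $\Re\,\tr(G_j^*A)=\sum_i\sigma_i^p(A)/\|A\|_{(p,k)}^{p-1}=\|A\|_{(p,k)}$, and by bounding the dual norm through Hölder's inequality, using $\|G_j\|_q=1$ from Remark \ref{remark_ext}. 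The same two computations give an alternative proof of $G_j\in\partial\|A\|_{(p,k)}$ for finite $p$ that bypasses Theorem \ref{subdif_kyFan KP}. If the limiting interpretation at $p=\infty$ proves awkward, I would fall back on passing to the limit $p\to\infty$ in the finite-$p$ inequality $\|A+B\|_{(p,k)}\ge\|A\|_{(p,k)}$. The difficulty there is that the hypothesis is stated at the fixed value of $p$, so a limiting argument requires the hypothesis to hold for all large $p$. For that reason I expect the direct dual-norm verification to be the intended route.
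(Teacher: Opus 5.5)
Your main argument is correct for $2\le p<\infty$, but it obtains $G\in\partial\|A\|_{(p,k)}$ by a different route from the paper.

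You cite Theorem~\ref{subdif_kyFan KP} to place $G=\sum_j\alpha_jG_j$ in $\operatorname{conv}\mathcal K=\partial\|A\|_{(p,k)}$, and then apply the subgradient inequality. The paper does not use that theorem here. Instead it verifies by hand that $\|G_j\|_{(p,k)}^*\le 1$ for each $j$:
\begin{itemize}
\item it writes $\Re\,\tr(X^*G_j)=\|A\|_{(p,k)}^{1-p}\sum_{i=1}^k\sigma_i^{p-2}(A)\,\Re\langle Av_i^{(j)},Xv_i^{(j)}\rangle$;
\item it chains Cauchy--Schwarz and H\"older over the $k$ indices;
\item it finishes with $\sum_i\langle X^*Xv_i^{(j)},v_i^{(j)}\rangle^{p/2}\le\sum_i\langle (X^*X)^{p/2}v_i^{(j)},v_i^{(j)}\rangle\le\|X\|_{(p,k)}^p$, from \cite[Theorem 9.2.10]{bhatia2013matrix}.
\end{itemize}
The triangle inequality then gives $\|G\|_{(p,k)}^*\le1$, and $\|A+B\|_{(p,k)}\ge\Re\,\tr((A+B)^*G)=\Re\,\tr(A^*G)=\|A\|_{(p,k)}$.

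Both arguments only use the easy inclusion $\operatorname{conv}\mathcal K\subseteq\partial\|A\|_{(p,k)}$. Yours is shorter, but it makes the converse depend on all of Theorem~\ref{subdif_kyFan KP}, including the machinery of Lemma~\ref{fv_derivative} and Proposition~\ref{supconsubdif}. The paper's version is self-contained and elementary. Your observation that arbitrary density matrices $T_i$ with $A^*AT_i=\sigma_i^2(A)T_i$ would not suffice, because of repeated singular values, is correct.

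Two corrections to your side discussion.

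\textbf{The H\"older alternative.} Bounding the dual norm by H\"older using $\|G_j\|_q=1$ fails as stated. H\"older only yields $\Re\,\tr(G_j^*X)\le\|X\|_p$, and since $\|X\|_{(p,k)}\le\|X\|_p$, this is the wrong direction. You need that $\operatorname{rank}G_j\le k$. Then von Neumann's trace inequality followed by H\"older on the top $k$ singular values gives $\Re\,\tr(G_j^*X)\le\sum_{i=1}^k\sigma_i(G_j)\sigma_i(X)\le\|G_j\|_q\|X\|_{(p,k)}$. Alternatively, use the paper's chain above.

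\textbf{The endpoint $p=\infty$.} This is not an obstacle you have to overcome. The hypothesis involves $\|A\|_{(p,k)}^{p-1}$ and $(A^*A)^{(p-2)/2}$, which have no meaning at $p=\infty$. The paper's proof tacitly covers only $2\le p<\infty$, so the endpoint in the statement should be read as $p<\infty$.
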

   \begin{proof}
  	For $X\in\M_{m\times n}(\C)$ and $1\leq j\leq n_1$, observe that
  	\begin{align*}
  		\Re\ \tr\left(X^*\frac{1}{\|A\|_{(p,k)}^{p-1}}A(A^*A)^{\frac{p-2}{2}}\sum\limits_{i=1}^kv_i^{(j)}{v_i^{(j)}}^*\right)&=\frac{1}{\|A\|_{(p,k)}^{p-1}} \sum\limits_{i=1}^k\sigma_i^{p-2}(A)\Re\langle Av_i^{(j)},X{v_i^{(j)}}\rangle\\
  		&\leq \frac{1}{\|A\|_{(p,k)}^{p-1}} \sum\limits_{i=1}^k\sigma_i^{p-2}(A)\|Av_i^{(j)}\|\|Xv_i^{(j)}\|\\
  		&=\frac{1}{\|A\|_{(p,k)}^{p-1}} \sum\limits_{i=1}^k\sigma_i^{p-1}(A)\|Xv_i^{(j)}\|\\
  		&\leq \frac{1}{\|A\|_{(p,k)}^{p-1}}\left(\sum\limits_{i=1}^k\left(\sigma_i^{p-1}(A)\right)^\frac{p}{p-1}\right)^\frac{p-1}{p}\left(\sum\limits_{i=1}^k\|Xv_i^{(j)}\|^p\right)^{\frac{1}{p}}\\
  		&=\left(\sum\limits_{i=1}^k\|Xv_i^{(j)}\|^p\right)^{\frac{1}{p}}\\
  		&=\left(\sum_{i=1}^{k}\Big(\tr(v_i^{(j)}{v_i^{(j)}}^*X^*Xv_i^{(j)}{v_i^{(j)}}^*)\Big)^\frac{p}{2}\right)^\frac{1}{p}.
  	\end{align*}
Also, by \cite[Theorem 9.2.10]{bhatia2013matrix}, we get 
  $$\sum_{i=1}^{k}\Big(\tr(v_i^{(j)}{v_i^{(j)}}^*X^*Xv_i^{(j)}{v_i^{(j)}}^*)\Big)^\frac{p}{2}\leq
  \sum_{i=1}^{k}\tr\Big(\big(X^*X\big)^\frac{p}{2}v_i^{(j)}{v_i^{(j)}}^*\Big)\leq \|X\|^p_{(p,k)}.$$
  This implies,  $\left\|\frac{1}{\|A\|_{(p,k)}^{p-1}}A(A^*A)^{\frac{p-2}{2}}\sum\limits_{i=1}^kv_i^{(j)}{v_i^{(j)}}^*\right\|_{(p,k)}^*\leq 1.$
  Further, note that $$\frac{1}{\|A\|_{(p,k)}^{p-1}}A(A^*A)^{\frac{p-2}{2}}\sum\limits_{i=1}^kT_i=\sum\limits_{j=1}^{n_1}\alpha_j\left(\frac{1}{\|A\|_{(p,k)}^{p-1}}A(A^*A)^{\frac{p-2}{2}}\sum\limits_{i=1}^kv_i^{(j)}{v_i^{(j)}}^*\right).$$ Thus, by triangle inequality, we get 
 \begin{equation}\label{con_sub_2} \left\|\frac{1}{\|A\|_{(p,k)}^{p-1}}A(A^*A)^{\frac{p-2}{2}}\sum\limits_{i=1}^kT_i\right\|_{(p,k)}^*\leq 1.
 	\end{equation}
  	 For $B\in\Mc,$ equation \eqref{con_sub_2} implies that
  	 \begin{align*}
  	 \|A+B\|_{(p,k)}&\geq\Re\,\tr\left((A+B)^*\frac{1}{\|A\|_{(p,k)}^{p-1}}A(A^*A)^{\frac{p-2}{2}}\sum\limits_{i=1}^kT_i\right)\\
  	 &=\Re\,\tr\left(A^*\frac{1}{\|A\|_{(p,k)}^{p-1}}A(A^*A)^{\frac{p-2}{2}}\sum\limits_{i=1}^kT_i\right)\\
  	 &=\|A\|_{(p,k)}\\
  	 \end{align*}
  	 Thus, we get the required result.
  	  \end{proof}
  	  
  	\section{Best approximants with respect to the Ky Fan $p$-$k$ norms}\label{sec3}
  In this section, we study various properties of best approximants with respect to the Ky Fan $p$-$k$ norms. The first theorem gives a sufficient condition for best approximations to a one-dimensional subspace $\operatorname{span}\{X\}$ to be unique.
   \begin{theorem}\label{uniq_thrm_onedim1}
  	Let $2\leq p<\infty$ and $1\leq k\leq n_0.$ Let $A,X\in\M_{m \times n}(\C)$ with $\operatorname{rank}(X)>n-k.$ Then there is a unique $\alpha_0 \in \C$ such that
  	\begin{equation}\label{uniq_1}
  		\|A-\alpha_0 X\|_{(p,k)}=\min_{\alpha\in \C}\|A-\alpha X\|_{(p,k)}.
  	\end{equation}
 
  \end{theorem}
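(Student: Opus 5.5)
We will show that $g(\alpha)=\|A-\alpha X\|_{(p,k)}$ attains its minimum on $\C$, and that two distinct minimizers would force $X$ to vanish on a $k$-dimensional subspace. That is impossible, because $\dim\ker X=n-\operatorname{rank}X<k$.

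\emph{Existence.} Since $\operatorname{rank}(X)>n-k\geq 0$, we have $X\neq 0$. Hence $g(\alpha)\geq |\alpha|\,\|X\|_{(p,k)}-\|A\|_{(p,k)}\to\infty$ as $|\alpha|\to\infty$. As $g$ is continuous and convex, the set of minimizers is nonempty, compact and convex.

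\emph{Uniqueness.} Suppose $\alpha_1\neq\alpha_2$ are both minimizers, and let $c$ be the minimum value. By convexity, $\hat\alpha=(\alpha_1+\alpha_2)/2$ is also a minimizer. Put $Y_j=A-\alpha_jX$ for $j=1,2$, and $M=A-\hat\alpha X=(Y_1+Y_2)/2$. Then
\[
\|Y_1\|_{(p,k)}=\|Y_2\|_{(p,k)}=\|M\|_{(p,k)}=c.
\]
We may assume $c>0$; otherwise $Y_1=Y_2=0$, which forces $(\alpha_1-\alpha_2)X=0$, a contradiction.

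Choose orthonormal $v_1,\ldots,v_k$ with $M^*Mv_i=\sigma_i^2(M)v_i$, and let $P=\sum_{i=1}^k v_iv_i^*$, a rank-$k$ orthogonal projection.

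\emph{Key inequality.} For every $Y\in\M_{m\times n}(\C)$ we claim $\|YP\|_p\leq\|Y\|_{(p,k)}$. The first way to see this: $YP$ has rank at most $k$, and $\sigma_i(YP)\leq\sigma_i(Y)\|P\|_\infty=\sigma_i(Y)$ for each $i$. The second way uses the same inequality \cite[Theorem 9.2.10]{bhatia2013matrix} that appears in the proof of the converse theorem above:
\[
\|YP\|_p^p=\tr\big((PY^*YP)^{p/2}\big)\leq \tr\big(P(Y^*Y)^{p/2}P\big)\leq\|Y\|_{(p,k)}^p,
\]
where the last step follows from \eqref{1}.

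For $Y=M$ there is equality, because $P$ commutes with $M^*M$. Concretely, $MP$ has singular values $\sigma_1(M),\ldots,\sigma_k(M)$, so $\|MP\|_p=c$.

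\emph{Contradiction.} Applying the key inequality to $Y_1,Y_2$ gives $\|Y_jP\|_p\leq c$. On the other hand,
\[
\left\|\tfrac12\big(Y_1P+Y_2P\big)\right\|_p=\|MP\|_p=c .
\]
Since $2\leq p<\infty$, the $c_p$ norm is strictly convex (as recalled in the introduction). Two vectors of norm at most $c$ whose midpoint has norm $c$ must therefore coincide, so $Y_1P=Y_2P$. This gives $(\alpha_2-\alpha_1)XP=0$, hence $XP=0$. Then the $k$-dimensional range of $P$ lies in $\ker X$, so $k\leq\dim\ker X=n-\operatorname{rank}(X)<k$, a contradiction. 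Thus $\alpha_0$ is unique.

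The main point needing care is the key inequality together with the equality case at $M$. The direct singular-value domination argument suffices for the inequality. The equality at $M$ relies on choosing $P$ as the spectral projection of $M^*M$ onto its top $k$ eigenvectors.
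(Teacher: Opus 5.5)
Your proof is correct, and it takes a different route from the paper's.

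\emph{The paper's route.} The paper assumes two minimizers $\alpha_0\neq\beta_0$ and observes that $A-\beta_0X\perp_B\gamma_0X$, where $\gamma_0=\beta_0-\alpha_0$. It then invokes the Birkhoff--James characterization of Corollary~\ref{BirkChara_2}, which rests on the subdifferential formula of Theorem~\ref{subdif_kyFan KP}. This gives orthonormal $v_i$ with
$\sum_i\sigma_i^{p-2}(A-\beta_0X)\langle\gamma_0Xv_i,A_{\alpha_0}v_i\rangle=\sum_i\sigma_i^{p-2}(A-\beta_0X)|\gamma_0|^2\|Xv_i\|^2$, where $A_{\alpha_0}=A-\alpha_0X$. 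Substituting this into the expansion of $\|A-\beta_0X\|_{(p,k)}^p=\sum_i\sigma_i^{p-2}(A-\beta_0X)\|(A_{\alpha_0}-\gamma_0X)v_i\|^2$, and then applying H\"older's inequality and \cite[Theorem 9.2.10]{bhatia2013matrix}, yields $\|A-\beta_0X\|_{(p,k)}^p<\|A-\beta_0X\|_{(p,k)}^p$.

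\emph{Your route.} You bypass the subdifferential machinery entirely. By passing to the midpoint minimizer $M$ and compressing with the spectral projection $P$ of $M^*M$, you reduce everything to strict convexity of the $c_p$ norm.

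\emph{What each buys.} The paper's argument has the merit of illustrating the orthogonality characterization developed in Section~\ref{sec2}. Yours is more elementary and more general:
\begin{enumerate}
\item It uses only $\operatorname{rank}(YP)\le k$, $\sigma_i(YP)\le\sigma_i(Y)$, and strict convexity of $c_p$. So it works verbatim for every $1<p<\infty$; rely on your first justification of the key inequality here, since the second needs $p/2\ge 1$. The paper's restriction to $p\ge 2$ comes only from where its subdifferential formula is available.
\item It handles the rank-deficient case uniformly. The paper's strict inequality needs $\sigma_i^{p-2}(A-\beta_0X)\|Xv_i\|^2>0$ for some $i$. For $p>2$, the rank hypothesis guarantees $Xv_i\neq 0$ for some $i$, but not that the corresponding $\sigma_i(A-\beta_0X)$ is nonzero. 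In your argument, zero singular values of $M$ among the first $k$ cause no difficulty, because $XP=0$ is obtained on the whole $k$-dimensional range of $P$.
\end{enumerate}
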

  \begin{proof} Assume $\alpha_0$ is not unique. Then there exists $\beta_0\in\C$ with $\beta_0\neq \alpha_0$ such that
  	$\|A-\beta_0 X\|_{(p,k)}=\|A-\alpha_0 X\|_{(p,k)}=\min\limits_{\alpha\in \C}\|A-\alpha X\|_{(p,k)}.$
  	Let $A_{\alpha_0} = A - \alpha_0 X$, and  $\gamma_0 = \beta_0 - \alpha_0$. Then $A - \beta_0 X = A_{\alpha_0} - \gamma_0 X$.
  	Note that $A-\beta_0 X \perp_B \gamma_0 X.$ By Corollary \ref{BirkChara_2}, there exist $k$ orthonormal vectors $v_1,v_2,\ldots,v_k$ satisfying $(A - \beta_0 X)^*(A - \beta_0 X)v_i=\sigma_i^2(A - \beta_0 X)v_i$ for all $1\leq i\leq k,$ such that $$
  		\sum_{i=1}^{k}\left\langle \Big((A - \beta_0 X)^*(A - \beta_0 X)\Big)^\frac{p-2}{2}(A - \beta_0 X)^*(\gamma_0 X)v_i,v_i\right\rangle=0.
  	$$
  	This gives \begin{equation}\label{uniq1dim_KyF_pk_eq_4}
  		\sum_{i=1}^{k}\sigma_i^{p-2}(A - \beta_0 X)\left\langle \gamma_0 X v_i,A_{\alpha_0}v_i\right\rangle=\sum_{i=1}^{k}\sigma_i^{p-2}(A - \beta_0 X)(|\gamma_0|^2\|Xv_i\|^2).
  		\end{equation}
  So 
  
  		\begin{align*}
  			\|A - \beta_0 X\|_{(p,k)}^p 
  			&= \sum_{i=1}^{k} \sigma_i^{p}(A - \beta_0 X) \\
  			&= \sum_{i=1}^{k} \sigma_i^{p-2}(A - \beta_0 X) \langle (A - \beta_0 X)v_i, (A - \beta_0 X)v_i \rangle \\
  			&= \sum_{i=1}^{k} \sigma_i^{p-2}(A - \beta_0 X) \left( \|A_{\alpha_0} v_i\|^2 - 2 \Re \langle A_{\alpha_0} v_i, \gamma_0 X v_i \rangle + |\gamma_0|^2\|Xv_i\|^2 \right).\end{align*} 
  			Using \eqref{uniq1dim_KyF_pk_eq_4} in the above equation, we get
  			$$
  			\|A - \beta_0 X\|_{(p,k)}^p= \sum_{i=1}^{k} \sigma_i^{p-2}(A - \beta_0 X) \left( \|A_{\alpha_0} v_i\|^2 - |\gamma_0|^2\|Xv_i\|^2 \right).$$
  		
  			Since $\gamma_0\neq 0 \text{ and } \operatorname{rank}(X)>n-k,$
  				\begin{equation} \label{unique_3} 
  			\begin{split}
  			\|A - \beta_0 X\|_{(p,k)}^p &<\sum_{i=1}^{k} \sigma_i^{p-2} (A - \beta_0 X) \|A_{\alpha_0} v_i\|^2\\
  			&\leq \Big(\sum_{i=1}^{k} \sigma_i^{p} (A - \beta_0 X)\Big)^{\frac{p-2}{p}}\Big( \sum_{i=1}^{k} \|A_{\alpha_0}v_i\|^p\Big)^\frac{2}{p}\\
  			&= \Big(\sum_{i=1}^{k} \sigma_i^{p} (A - \beta_0 X)\Big)^{\frac{p-2}{p}}\Big(\sum_{i=1}^{k} \big(\tr(v_iv_i^*A_{\alpha_0}^*A_{\alpha_0}v_iv_i^*)\big)^\frac{p}{2}\Big)^\frac{2}{p}.
  		\end{split}
  	\end{equation}
  	From \cite[Theorem 9.2.10]{bhatia2013matrix}, we have 
  	$$\sum_{i=1}^{k}\Big(\tr(v_iv_i^*A_{\alpha_0}^*A_{\alpha_0}v_iv_i^*)\Big)^\frac{p}{2}\leq
  	\sum_{i=1}^{k}\tr\Big(\big(A_{\alpha_0}^*A_{\alpha_0}\big)^\frac{p}{2}v_iv_i^*\Big)\leq \|A_{\alpha_0}\|^p_{(p,k)}.$$
  	So, by \eqref{unique_3}, we get
  	$$\|A - \beta_0 X\|_{(p,k)}^p<\|A - \beta_0 X\|_{(p,k)}^{p-2}\|A_{\alpha_0}\|^2_{(p,k)}=\|A - \beta_0 X\|_{(p,k)}^p.$$
  	This is a contradiction.
  \end{proof}
  
  With the help of the above theorem, we show that one of the conjectures raised in \cite{surveyZ} is not true in general.
  First, we introduce some notation and recall results from \cite{surveyZ} that will be useful in what follows.\\ Let 
  $\underbrace{\rho_1,\ldots,\rho_1}_{s_1},\underbrace{\rho_2,\ldots,\rho_2}_{s_2},\ldots,$ $\underbrace{\rho_l,\ldots,\rho_l}_{s_l}$ be the singular values of $R^{(st)}=A-Y^{(st)}$ and let $s_i$ denote the multiplicity of $\rho_i.$ For $k\in\{1,2,\cdots,l\},$ let $t_k=s_1+s_2+\ldots+s_k.$  
  Note that from the construction of $\Mc_j$ (defined in \eqref{subset_seq}), we have
  $$\Mc_{t_{k-1}+1}=\Mc_{t_{k-1}+2}=\cdots=\Mc_{t_{k}},\quad \text{ where } k=1,2,\dots, l.$$ 

  For $k=l,$ we have $\Mc_{t_l}=\Mc_{n_0}=\{Y^{(st)}\}.$ We recall the following results from \cite{surveyZ}.
  
  \begin{pro}\cite[Corollary 10.3]{surveyZ}\label{Skproperty}
  	Let $X\in \Mc$ and let $R=A-X.$
  	\begin{enumerate}
  		\item If $\sigma_i(R)\neq \rho_1$ for some $1\leq i \leq s_1,$ then $\|R\|_\infty>\rho_1,$ that is, $R\notin\Mc_1.$\\
  		\item If $X=R-A\in \Mc_{t_k}$ and $\sigma_i(R)\neq \rho_{k+1}$ for some   $t_k+1\leq i\leq t_{k+1},$ then $\sigma_{t_k+1}(R)>\rho_{k+1},$ that is, $R\notin\Mc_{t_{k+1}}.$\\    
  	\end{enumerate} 
  \end{pro}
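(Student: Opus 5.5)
Both parts rest on one structural fact about the strict spectral approximation: for every $k$, the top $t_k$ singular values of $R^{(st)}$ are the lexicographically smallest possible among residuals from $\Mc_{t_k}$. Concretely, $\Mc_{t_k}$ is the set of $X\in\Mc_{t_{k-1}}$ minimizing $\|A-X\|_{(2,t_k)}$. By induction on $k$, every $X\in\Mc_{t_k}$ satisfies $\sigma_i(A-X)=\rho_j$ for $t_{j-1}<i\le t_j$, $j\le k$.

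\textbf{Part (1).} Set $\mathcal{M}_1$ to be the set of spectral approximations. Since $Y^{(st)}\in\Mc_1$, we have $\min_{X\in\Mc}\|A-X\|_\infty=\rho_1$. Suppose $\sigma_i(R)\ne\rho_1$ for some $i\le s_1$.
\begin{itemize}
\item If $\sigma_1(R)>\rho_1$, we are done.
\item Otherwise $\sigma_1(R)\le\rho_1$, so $\sigma_1(R)=\rho_1$ by minimality, and hence $X\in\Mc_1$.
\end{itemize}
In the second case, consider the convex combination $Z=\tfrac12(X+Y^{(st)})\in\Mc_1$. By the triangle inequality for the Ky Fan $(2,j)$ norms (equivalently, the convexity of partial sums of squared singular values),
\[
\|A-Z\|_{(2,j)}\le \tfrac12\big(\|R\|_{(2,j)}+\|R^{(st)}\|_{(2,j)}\big)\quad\text{for } j\le s_1 .
\]
Because $\sigma_i(R)<\rho_1$ for some $i\le s_1$, this produces a residual whose first $s_1$ singular values are lexicographically smaller than $(\rho_1,\dots,\rho_1)$. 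That contradicts the minimality defining $\Mc_2,\dots,\Mc_{t_1}$, namely $\Mc_{t_1}=\Mc_{s_1}$, which contains $Y^{(st)}$ with these values equal to $\rho_1$.

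The cleaner route I would take is to use \cite[Theorem 7.3]{surveyZ}, quoted in the introduction. Every spectral approximation $\widehat Y$ has residual $\rho_1W_1Z_1^T+W_2\widehat\Sigma_YZ_2^T$, where $W_1Z_1^T$ has rank $s_0$ and $\|\widehat\Sigma_Y\|_\infty\le\rho_1$. Thus every element of $\Mc_1$ already has $\rho_1$ with multiplicity at least $s_0$. Next I would show $s_0=s_1$. The inequality $s_0\le s_1$ is clear. For $s_0<s_1$ being impossible, note that the multiplicity of $\rho_1$ is the same for all spectral approximants, because $W_1,Z_1$ come from a fixed $R_\infty$ and the block $W_2\widehat\Sigma_YZ_2^T$ can attain $\rho_1$ only in ways that $R^{(st)}$ also exhibits. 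Granting this, any $X\in\Mc_1$ has $\sigma_i(R)=\rho_1$ for all $i\le s_1$, which is the contrapositive of (1).

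\textbf{Part (2).} Apply the same argument to the reduced problem. For $X\in\Mc_{t_k}$, the residual restricted to the complement of the top $t_k$ singular subspaces plays the role of $R_\infty$. On $\Mc_{t_k}$, the first $t_k$ singular values are frozen at $\rho_1,\dots,\rho_k$, so minimizing $\|A-X\|_{(2,t_k+1)}$ over $\Mc_{t_k}$ amounts to minimizing $\sigma_{t_k+1}$. This is a spectral-type approximation problem, and its optimal value is $\rho_{k+1}$, attained at $Y^{(st)}$. Repeating the Part (1) argument gives $\sigma_{t_k+1}(R)>\rho_{k+1}$ whenever some $\sigma_i(R)\neq\rho_{k+1}$ for $t_k<i\le t_{k+1}$.

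The main obstacle is the multiplicity step: showing that a minimizer cannot have $\rho_{k+1}$ with multiplicity smaller than $s_{k+1}$ while keeping $\sigma_{t_k+1}=\rho_{k+1}$. To handle it, I would argue from the definition that $\Mc_{t_k+1}=\cdots=\Mc_{t_{k+1}}$. If such an $X$ existed in $\Mc_{t_k+1}$, then the minimum of $\|A-\cdot\|_{(2,j)}$ over $\Mc_{j-1}$ would be strictly below the value at $Y^{(st)}$ for some $j\le t_{k+1}$. This would contradict $Y^{(st)}\in\Mc_j$.
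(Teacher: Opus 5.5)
The paper gives no proof of this statement; it is quoted from \cite[Corollary 10.3]{surveyZ}. Your proposal therefore has to stand on its own, and as written it has a genuine gap.

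\textbf{The false claim in your chosen route.} The route you commit to for Part (1), via Theorem 7.3, rests on the claim that the multiplicity of $\rho_1$ is the same for all spectral approximants, and you then simply grant it. The claim is false. Take $A=\operatorname{diag}(1,0)$ and $\Mc=\{\operatorname{diag}(0,t):t\in\C\}$. The residuals are $\operatorname{diag}(1,-t)$, the spectral approximants are exactly those with $|t|\le 1$, and $\rho_1=1$. The multiplicity of $\rho_1$ is $2$ for $|t|=1$ but $1$ for $|t|<1$; here $Y^{(st)}=0$ and $s_1=1$.

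The same example shows that $s_0$ depends on which $Y_\infty$ you use. For $Y_\infty=\operatorname{diag}(0,1)$ one gets $s_0=2>s_1$. So ``every element of $\Mc_1$ has multiplicity at least $s_0$'' and ``$s_0\le s_1$ is clear'' hold only for a suitably chosen $Y_\infty$, namely one of minimal multiplicity. Note also that the inequality you try to extract from the false claim, $s_0\ge s_1$, is the trivial one: if $s_0<s_1$, then $\boldsymbol{\sigma}(R_\infty)$ would be lexicographically smaller than $\boldsymbol{\sigma}(R^{(st)})$.

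\textbf{Further problems in Part (2) and in your setup.} Minimizing $\sigma_{t_k+1}(A-X)$ over the convex set $\Mc_{t_k}$ is not a spectral approximation problem over a subspace, so Theorem 7.3 cannot simply be ``repeated''. The common top singular subspaces you restrict to are also never established. Your opening description of $\Mc_{t_k}$ is incorrect as well. $\Mc_{t_k}=\Mc_{t_{k-1}+1}$ consists of the minimizers of $\|A-\cdot\|_{(2,t_{k-1}+1)}$ over $\Mc_{t_{k-1}}$, not of $\|A-\cdot\|_{(2,t_k)}$. For a counterexample, take $A=I_3$, $\Mc=\operatorname{span}\{\operatorname{diag}(-\varepsilon,1,1)\}$ and $0<\varepsilon<\sqrt3-1$. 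The strict approximant is $0$ with $s_1=3$, yet the residual $\operatorname{diag}(1+\varepsilon,0,0)$ has smaller $\|\cdot\|_{(2,3)}$ norm.

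\textbf{What works instead.} None of this machinery is needed. The argument that works is the one compressed into your first route and your last sentence, with the missing step made explicit. Put $t_0=0$ and $\Mc_0=\Mc$, and let $X\in\Mc_{t_k}$.
\begin{enumerate}
\item For each $i\le t_k$, both $X$ and $Y^{(st)}$ minimize $\|A-\cdot\|_{(2,i)}$ over $\Mc_{i-1}$. Hence $\sigma_i(R)=\sigma_i(R^{(st)})$ for $i\le t_k$.
\item Assume $\sigma_{t_k+1}(R)\le\rho_{k+1}$. Run $j=t_k+1,\dots,t_{k+1}$ inductively, assuming $X\in\Mc_{j-1}$ and $\sigma_i(R)=\sigma_i(R^{(st)})$ for $i<j$.
\item Since $Y^{(st)}\in\Mc_j$, we have $\|R\|_{(2,j)}\ge\|R^{(st)}\|_{(2,j)}$. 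This gives $\sigma_j(R)\ge\rho_{k+1}\ge\sigma_{t_k+1}(R)\ge\sigma_j(R)$.
\item Hence $\sigma_j(R)=\rho_{k+1}$, the two norms are equal, and so $X\in\Mc_j$.
\end{enumerate}
This proves the contrapositive of (2), and the case $k=0$ is (1). Showing $X\in\Mc_{j-1}$ before comparing with $Y^{(st)}\in\Mc_j$ is exactly the step your last paragraph omits.

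Equivalently, once the first $t_k$ singular values are frozen, lexicographic minimality of $\boldsymbol{\sigma}(R^{(st)})$ finishes the argument in one line. The midpoint $Z$ in your first route adds nothing, since $X$ itself already produces the contradiction.
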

  For $1<p<\infty,$ let $R_p=A-Y_p.$ Let $R^{(st)}=A-Y^{(st)}.$
  Let  $Y_\infty\in \Mc$ be a spectral approximation of $A,$ and let $R_\infty=A-Y_\infty.$ Then, by definition of approximations we have 
  $$\|R_\infty\|_\infty\leq \|R_p\|_\infty\leq \|R_p\|_p\leq \|R_\infty\|_p\leq {n_0}^{1/p}\|R_\infty\|.$$
  This further implies that \begin{equation}\label{boundedness2}\sigma_1(R_p)\to \sigma_1(R_\infty)\text{ as }p\to \infty.
  \end{equation}
  
  \begin{pro}\cite[p. 33]{surveyZ}\label{genereal_conv1}
  	 If  $$\lim_{p\to\infty}\sigma_i(R_p)=\sigma_i(R^{(st)})\text{ for all } i=1,2,\cdots,n_0,$$ then
  	$\lim\limits_{p\to\infty}R_p=R^{(st)}.$
  \end{pro}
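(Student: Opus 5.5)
The plan is a compactness argument: every subsequence of $(R_p)$ has a further subsequence converging to $R^{(st)}$, which forces convergence of the whole family. The key point is that any limit point of $R_p$ is the residual of an element of $\Mc$ with exactly the singular values of $R^{(st)}$, and such an element must be $Y^{(st)}$.

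\emph{Boundedness.} From the chain of inequalities preceding \eqref{boundedness2} we have
$$\sigma_1(R_p)\le \|R_p\|_p\le n_0^{1/p}\|R_\infty\|\le n_0\|R_\infty\| \quad\text{for } p\ge 1.$$
So the family $\{R_p\}$ is bounded in $\M_{m\times n}(\C)$. Hence $Y_p=A-R_p$ is also bounded, and it lies in the closed subspace $\Mc$.

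\emph{Limit points.} Take any sequence $p_j\to\infty$. By compactness there is a further subsequence, still written $p_j$, with $Y_{p_j}\to \widetilde Y$. Since $\Mc$ is closed, $\widetilde Y\in\Mc$. Set $\widetilde R=A-\widetilde Y$, so that $R_{p_j}\to\widetilde R$. Singular values are continuous functions of the matrix (Weyl's perturbation inequality). Together with the hypothesis this gives $\sigma_i(\widetilde R)=\sigma_i(R^{(st)})$ for all $i=1,\dots,n_0$.

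\emph{Identification of $\widetilde Y$.} I would show by induction on $k$ that $\widetilde Y\in\Mc_k$ for every $k$.
For $k=1$: $\|\widetilde R\|_\infty=\sigma_1(R^{(st)})=\rho_1$. This is the minimal spectral distance, because $Y^{(st)}\in\Mc_1$. Hence $\widetilde Y\in\Mc_1$.
For the inductive step, suppose $\widetilde Y\in\Mc_{k-1}$. Then
$$\|A-\widetilde Y\|_{(2,k)}=\Big(\sum_{i\le k}\sigma_i^2(R^{(st)})\Big)^{1/2}=\|A-Y^{(st)}\|_{(2,k)}.$$
The right-hand side equals $\min_{X\in\Mc_{k-1}}\|A-X\|_{(2,k)}$, since $Y^{(st)}\in\Mc_{n_0}\subset\Mc_k$. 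By the definition of $\Mc_k$, this gives $\widetilde Y\in\Mc_k$.
Consequently $\widetilde Y\in\Mc_{n_0}=\{Y^{(st)}\}$, so $\widetilde R=R^{(st)}$. (Equivalently, $\boldsymbol\sigma(A-\widetilde Y)$ is the lexicographic minimum, so $\widetilde Y$ is a strict spectral approximation.)

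\emph{Conclusion and main obstacle.} Every sequence $p_j\to\infty$ has a subsequence along which $R_{p_j}\to R^{(st)}$. A standard argument then gives $\lim_{p\to\infty}R_p=R^{(st)}$.
The step I expect to carry the real weight is the last part of the identification: that $\Mc_{n_0}$ is a singleton, i.e.\ that the strict spectral approximation is unique. I would invoke this from \cite{SSA}, where $Y^{(st)}$ is defined as the unique element of $\Mc_{n_0}$. The underlying reason is that each $\|\cdot\|_{(2,k)}^2$ is convex and $\|\cdot\|_{(2,n_0)}$ is the Frobenius norm, which is strictly convex. So minimizing it over the convex set $\Mc_{n_0-1}$ yields a unique minimizer.
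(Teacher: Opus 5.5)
Your proof is correct. It is the expected argument: boundedness of $\{R_p\}$ and continuity of singular values, followed by the induction showing that every limit point $\widetilde Y$ of $Y_p$ lies in each $\Mc_k$ and hence equals the unique element of $\Mc_{n_0}$. The paper gives no proof of its own here (it cites Zi\k{e}tak), but it uses the same subsequence-plus-uniqueness reasoning in Remark~\ref{uniq_strict_con1}(1), and your strict-convexity justification that $\Mc_{n_0}$ is a singleton supplies the one point the paper takes for granted.
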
 
  
  \begin{pro}\cite[Corollary 10.5]{surveyZ}\label{1stsingular_val}
  	 $$\lim_{p\to\infty}\sigma_i(R_p)=\sigma_i(R^{(st)})=\rho_1\quad \text{ for } i=1,2,\cdots s_1.$$ 
  \end{pro}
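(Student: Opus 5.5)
The plan is a compactness argument: extract a convergent subsequence of $\{R_p\}$ and identify the singular values of any limit point using \eqref{boundedness2} together with part (1) of Proposition \ref{Skproperty}.

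\textbf{Step 1: boundedness and reduction to subsequences.} The chain of inequalities preceding \eqref{boundedness2} gives
$$\|R_p\|_\infty\leq \|R_p\|_p\leq n_0^{1/p}\|R_\infty\|_\infty\leq n_0\|R_\infty\|_\infty,$$
so the family $\{R_p\}_{p>1}$ is bounded in $\M_{m\times n}(\C)$. It therefore suffices to show the following: every sequence $p_j\to\infty$ has a further subsequence along which $\sigma_i(R_{p_j})\to\rho_1$ for all $1\leq i\leq s_1$. So fix $p_j\to\infty$. By Bolzano--Weierstrass, pass to a subsequence with $R_{p_j}\to \bar R$.

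\textbf{Step 2: the limit is the residual of a spectral approximation.} Since $Y_{p_j}=A-R_{p_j}\in\Mc$ and $\Mc$ is a closed subspace, $\bar Y:=A-\bar R\in\Mc$, and $\bar R=A-\bar Y$. Singular values depend continuously on the matrix, so \eqref{boundedness2} gives
$$\sigma_1(\bar R)=\lim_j\sigma_1(R_{p_j})=\sigma_1(R_\infty).$$
Hence $\bar Y$ is a spectral approximation, i.e.\ $\bar Y\in\Mc_1$. The strict spectral approximant $Y^{(st)}$ lies in $\Mc_{n_0}\subset\Mc_1$, so it is also a spectral approximation, and therefore $\rho_1=\sigma_1(R^{(st)})=\sigma_1(R_\infty)$. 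Thus $\|\bar R\|_\infty=\rho_1$.

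\textbf{Step 3: identifying the top $s_1$ singular values.} Suppose $\sigma_i(\bar R)\neq\rho_1$ for some $1\leq i\leq s_1$. Part (1) of Proposition \ref{Skproperty}, applied to $X=\bar Y$, gives $\|\bar R\|_\infty>\rho_1$, which contradicts Step 2. Hence $\sigma_i(\bar R)=\rho_1$ for every $i\leq s_1$. By continuity of singular values, $\sigma_i(R_{p_j})\to\rho_1$ along the subsequence. Since the limit $\rho_1$ does not depend on the subsequence chosen, the full limit $\lim_{p\to\infty}\sigma_i(R_p)=\rho_1=\sigma_i(R^{(st)})$ exists for $i=1,\ldots,s_1$.

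The only delicate point is Step 2. There one must be careful that the limiting matrix really lies in $\Mc$, which needs closedness of the finite-dimensional subspace. One must also check that $\rho_1$ coincides with the spectral minimum $\sigma_1(R_\infty)$, which uses $Y^{(st)}\in\Mc_1$. Once these are in place, Proposition \ref{Skproperty} does the remaining work.
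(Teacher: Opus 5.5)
Your proof is correct. The paper does not prove this statement itself; it cites \cite[Corollary 10.5]{surveyZ}. Your argument is the same pattern the paper uses in Remark \ref{uniq_strict_con1}(1) and in the proof of Theorem \ref{2ndsingularvalue}: boundedness of $\{R_p\}$, then \eqref{boundedness2} to show every subsequential limit is the residual of a spectral approximation, then Proposition \ref{Skproperty}(1) to force $\sigma_i=\rho_1$ for $i\leq s_1$.
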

  
  \begin{remark}\label{uniq_strict_con1}
  	\begin{enumerate}
  		\item By \eqref{boundedness2}, the limit of any convergent subsequence of $Y_p$ is a spectral approximation. If the spectral approximation of $A$ is unique, then it will be $Y^{(st)}.$ So $Y^{(st)}$ is the limit for every convergent subsequence of $Y_p.$ Since $Y_p$ is bounded, we get $Y_p\to Y^{(st)}$ as $p\to\infty.$ 
  		\item If $M=\operatorname{span}\{X_0\}$ with $\operatorname{rank}(X_0)=n,$ then by Theorem~\ref{uniq_thrm_onedim1} the spectral approximation of $A$ is unique. Thus, in this case, \eqref{conjecture} holds.
  		\item If $s_1=n_0,$ then by Proposition \ref{genereal_conv1} and Proposition \ref{1stsingular_val}, we get $\lim\limits_{p\to\infty}R_p=R^{(st)}.$ Thus, \eqref{conjecture} holds true.
  	\end{enumerate}
  \end{remark}

  \begin{theorem}\label{2ndsingularvalue}
  	Let $s_1=1.$ Then for each $i=t_1+1,t_1+2,\ldots,t_2$, we have
  	$$\lim_{p\to\infty}\sigma_i(R_p)=\sigma_i(R^{st})=\rho_2.$$
  \end{theorem}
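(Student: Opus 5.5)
The plan is a compactness argument. An upper bound on $\sigma_2(R_p)$ will come directly from the $c_p$-minimality of $Y_p$. The matching lower bound for any limit point will come from Proposition \ref{Skproperty}. Since $s_1=1$, we have $t_1=1$, so the claim concerns the indices $i=2,\ldots,t_2$.

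\emph{Step 1 (upper bound).} Since $Y_p$ is the $c_p$-approximation and $Y^{(st)}\in\Mc$, we have $\|R_p\|_p^p\le\|R^{(st)}\|_p^p$. The singular values of $R^{(st)}$ are $\rho_1$ (once, because $s_1=1$) followed by values that are all at most $\rho_2$. Hence $\|R^{(st)}\|_p^p\le\rho_1^p+(n_0-1)\rho_2^p$. On the other side, $\|R_p\|_p^p\ge\sigma_1^p(R_p)+\sigma_2^p(R_p)$. Moreover $\sigma_1(R_p)=\|R_p\|_\infty\ge\|R_\infty\|_\infty=\rho_1$, because $Y_\infty$ is a spectral approximation and, by Proposition \ref{1stsingular_val}, $\rho_1=\sigma_1(R_\infty)$. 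Subtracting $\rho_1^p$ from both sides gives
\begin{equation*}
\sigma_2(R_p)\le (n_0-1)^{1/p}\rho_2 .
\end{equation*}
Therefore $\limsup_{p\to\infty}\sigma_i(R_p)\le\limsup_{p\to\infty}\sigma_2(R_p)\le\rho_2$ for every $i\ge2$. This is exactly where $s_1=1$ is used: only one term equal to $\rho_1$ has to be cancelled, and that cancellation is justified by $\sigma_1(R_p)\ge\rho_1$.

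\emph{Step 2 (limit points).} Take any sequence $p_j\to\infty$. The family $\{Y_p\}$ is bounded, so we may pass to a subsequence along which $Y_{p_j}\to\widehat Y\in\Mc$ and $R_{p_j}\to\widehat R=A-\widehat Y$. By \eqref{boundedness2} and Remark \ref{uniq_strict_con1}(1), $\widehat Y$ is a spectral approximation, so $\widehat Y\in\Mc_1=\Mc_{t_1}$. Singular values depend continuously on the matrix, so Step 1 yields $\sigma_2(\widehat R)\le\rho_2$.

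\emph{Step 3 (lower bound).} Suppose $\sigma_i(\widehat R)\neq\rho_2$ for some $i$ with $t_1+1\le i\le t_2$. Proposition \ref{Skproperty}(2), applied with $k=1$ to $\widehat Y\in\Mc_{t_1}$, then gives $\sigma_2(\widehat R)>\rho_2$. This contradicts Step 2. Hence $\sigma_i(\widehat R)=\rho_2$ for all $i=2,\ldots,t_2$.

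\emph{Step 4 (conclusion).} Every sequence $p_j\to\infty$ has a subsequence along which $\sigma_i(R_{p_j})\to\rho_2$ for $i=2,\ldots,t_2$. Therefore the full limit exists and equals $\rho_2=\sigma_i(R^{(st)})$.

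The main point requiring care is Step 1, namely cancelling $\sigma_1^p(R_p)$ against $\rho_1^p$. Everything else is routine compactness together with Proposition \ref{Skproperty}.
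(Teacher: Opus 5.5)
Your proof is correct and uses the same ingredients as the paper's: comparing $\|R_p\|_p$ with $\|R^{(st)}\|_p$, using $\sigma_1(R_p)\ge\rho_1$ to cancel the single top term, compactness of $\{R_p\}$, and Proposition \ref{Skproperty}(2) applied to a limit point in $\Mc_1$. The paper phrases this as a contradiction (a limit point with a singular value above $\rho_2$ would make $Y^{(st)}$ a strictly better $c_{p_j}$-approximant for large $j$); you instead derive the explicit bound $\sigma_2(R_p)\le(n_0-1)^{1/p}\rho_2$ first, which also correctly applies the conclusion of Proposition \ref{Skproperty}(2) to $\sigma_{t_1+1}=\sigma_2$ rather than to $\sigma_{i_0}$ as the paper writes.
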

  \begin{proof}
  	Suppose on the contrary that for some $i_0$ $$\lim\limits_{p\to\infty}\sigma_{i_0}(R_p)\neq\rho_2,$$ where $t_1\leq i_0\leq t_2.$ Let $R_{p_j}$ be a convergent subsequence of $R_p$
  	which converges to $R_0.$ This implies $\sigma_{i_0}(R_0)\neq \sigma_{i_0}(R^{st})=\rho_2.$ By \eqref{boundedness2}, $R_0-A\in\Mc_1$ and by Proposition \ref{Skproperty}, it further implies that $\rho_2<\sigma_{i_0}(R_0)$. Then, by the continuity of singular values, there exists  ${N_0}\in \N$ and $Z_0>0$ such that
  	$$\rho_2<Z_0<\sigma_{i_0}(R_{p_j}) \text{ for all } j>N_0.$$
  	Now we can choose a large $j>N_0$ such that $(n_0-1)\rho_2^{p_j}<Z_0^{p_j }.$ Thus
  	\begin{align*}
  		\sum_{i=1}^{n_0}\sigma_i^{p_j}(R^{st})&\leq\rho_1^{p_j}+(n_0-1)\rho_2^{p_j}\\
  		&\leq \sigma_1^{p_j}(R_{p_j})+(n_0-1)\rho_2^{p_j}\\
  		&<\sigma_1^{p_j}(R_{p_j})+\sigma_{i_0}^{p_j}(R_{p_j})\\
  		&\leq \|R_{p_j}\|_{p_j}^{p_j}.
  	\end{align*}
  	This implies that $Y^{st}$ is a better $c_{p_j}$-approximation than $Y_{p_{j}},$ which contradicts the minimality of $Y_{p_j}.$ 
  \end{proof}		
  \begin{theorem}
  	For the spaces $\M_{2\times n}(\C)$ and $\M_{m\times 2}(\C),$  \eqref{conjecture} holds, that is, $$Y_p\to Y^{(st)} \quad \text{ as } p\to\infty. $$
  \end{theorem}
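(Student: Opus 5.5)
For $\M_{2\times n}(\C)$ and $\M_{m\times 2}(\C)$ we have $n_0=\min\{m,n\}\le 2$, so every residual has at most two singular values. The plan is to show that $\sigma_i(R_p)\to\sigma_i(R^{(st)})$ for $i=1,2$, and then invoke Proposition \ref{genereal_conv1} to conclude $R_p\to R^{(st)}$, i.e.\ $Y_p\to Y^{(st)}$. The case $n_0=1$ is immediate from Proposition \ref{1stsingular_val} (or Remark \ref{uniq_strict_con1}(3), since then $s_1=n_0$), so assume $n_0=2$.

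We split according to the multiplicity $s_1$ of $\rho_1=\sigma_1(R^{(st)})$, with $s_1\in\{1,2\}$.
\begin{enumerate}
\item If $s_1=2=n_0$, then Remark \ref{uniq_strict_con1}(3) already gives $\lim_{p\to\infty}R_p=R^{(st)}$, hence \eqref{conjecture}.
\item If $s_1=1$, then $t_1=1$ and $t_2=2=n_0$. Proposition \ref{1stsingular_val} gives $\sigma_1(R_p)\to\rho_1=\sigma_1(R^{(st)})$. Theorem \ref{2ndsingularvalue}, applied with $i=2$ (the range $t_1+1,\dots,t_2$ is exactly $\{2\}$), gives $\sigma_2(R_p)\to\rho_2=\sigma_2(R^{(st)})$.
\end{enumerate}
In both cases all singular values of $R_p$ converge to those of $R^{(st)}$. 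Proposition \ref{genereal_conv1} then yields $R_p\to R^{(st)}$, and so $Y_p=A-R_p\to A-R^{(st)}=Y^{(st)}$.

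The main point to check is that the hypotheses of the cited results match. First, Theorem \ref{2ndsingularvalue} must cover exactly the index $i=2$ when $s_1=1$. In its proof the index $i_0$ is stated as $t_1\le i_0\le t_2$, so it is worth confirming that it is used for $i_0=2$ and not only $i_0=1$. Second, we use the convergence argument behind Proposition \ref{genereal_conv1}: bounded $R_p$, every subsequential limit $R_0$ has $A-R_0\in\Mc$ with singular values equal to those of $R^{(st)}$, hence equals $R^{(st)}$ by uniqueness of the strict approximant. That this holds is taken from \cite{surveyZ}. So the whole argument reduces to the observation that with $n_0\le 2$ the cases $s_1=1$ and $s_1=2$ are exhaustive, and each is settled by an earlier result.
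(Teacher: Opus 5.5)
Your proposal is correct and follows essentially the same route as the paper: the paper also splits into the case $\sigma_1(R^{(st)})=\sigma_2(R^{(st)})$, handled by Remark~\ref{uniq_strict_con1}(3), and the case $\sigma_1(R^{(st)})>\sigma_2(R^{(st)})$, handled by Proposition~\ref{1stsingular_val}, Theorem~\ref{2ndsingularvalue} and Proposition~\ref{genereal_conv1}. Your explicit treatment of the degenerate case $n_0=1$, which the paper leaves implicit, is a harmless addition.
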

  	\begin{proof}
  		Consider the singular values of $R^{(st)}=A- Y^{(st)}$ as $\sigma_1(R^{(st)})$ and $\sigma_2(R^{(st)}).$ 
  		\begin{enumerate}
  			\item If $\sigma_1(R^{(st)})=\sigma_2(R^{(st)}),$ then by Remark \ref{uniq_strict_con1}, we get $R_p\to R^{(st)}$ as $p\to\infty.$
  			\item If $\sigma_1(R^{(st)})>\sigma_2(R^{(st)}),$ then by Proposition \ref{1stsingular_val} and Theorem \ref{2ndsingularvalue}, we get $$\lim\limits_{p\to\infty}\sigma_i(R_p)=\sigma_i(R^{(st)}), \text{ for } i=1,2.$$ Thus, by Proposition \ref{genereal_conv1}, we get $R_p\to R^{(st)}$ as $p\to\infty.$\\
  			
  			\end{enumerate}	\end{proof}
 	\begin{remark}
  For $1< p<\infty$ and $1\leq k\leq n,$ let $B^{(p,k)}\in \Mc$ denote a best approximation with respect to the Ky Fan $p$-$k$ norm. Let $R^{(p,k)} = A - B^{(p,k)}$, and let $\W^{(p,k)}$ denote the collection of all such $R^{(p,k)}$. The attempt to prove \eqref{conjecture} in \cite{surveyZ} was by contradiction. To do that, the matrix $R^{(p,k)}_{\min}\in \W^{(p,k)}$ is defined as the minimum element in the lexicographic ordering of the set $\W^{(p,k)}.$ It was also shown that there exists a number $z$ and index $j_0,$ such that for all $j>j_0,$ $z<\sigma_{t_k+1}(R_{p_j})$ over a subsequence $\{R_{p_j}\}$ of $\{R_p\}.$ Further, it was required to find a constant $w$ with $0<w<z$ and a matrix $\widetilde{X}^{(p_j)}_{t_k}\in\Mc$ such that $\widetilde{R}^{(p_j)}_{t_k}=A-\widetilde{X}^{(p_j)}_{t_k}$ satisfies $\|\widetilde{R}^{(p_j)}_{t_k}\|_{(p_j,t_k)}\leq\|R_{p_j}\|_{(p_j,t_k)}$ and $\sigma_{{t_k}+1}(\widetilde{R}^{(p_j)}_{t_k})<w.$ 
In \cite[p.33]{surveyZ}, it was conjectured that
   $\widetilde{R}^{(p_j)}_{t_k}$ could be selected as $R^{(p_j,t_k)}_{\min}.$ We now show that this does not hold true in general. To see this, we consider the following example.\\
 Let $A=\begin{bmatrix}
  	1/2&0&0\\
  	0&2&0\\
  	0&0&0
  \end{bmatrix}$ and $X=\begin{bmatrix}
  0&0&0\\
  0&1&0\\
  0&0&1
  \end{bmatrix}\in\M_{3 \times 3}(\C).$ Let $\Mc=\operatorname{span}\{X\}.$ Then we get $R^{(st)}=\begin{bmatrix}
   1/2&0&0\\
  0&1&0\\
  0&0&1
  \end{bmatrix}.$ It follows from Proposition \ref{1stsingular_val} that \begin{equation}\label{3x3case_eq1}\lim\limits_{p\to\infty}\sigma_i(R_p)=\sigma_i(R^{(st)})=1\,\, \text{ for }i=1,2.
  \end{equation} Suppose $\lim\limits_{p\to\infty}\sigma_3(R_p)\neq\sigma_3(R^{(st)}).$ Since $R_p$ is bounded, there exists a subsequence $R_{p_j}$ of $R_p$ such that $R_{p_j}$ converges to $R_0.$ Thus, by \eqref{3x3case_eq1}, $R_0-A\in\Mc_2$ and Proposition \ref{Skproperty} implies that, $\sigma_3(R^{(st)})<\sigma_3(R_0).$ By the continuity of singular values, there exist $N_1\in\N$ and $Z_1>0$ such that 
  \begin{equation}\label{contra_eqn1}
  	\sigma_3(R^{(st)})<Z_1<\sigma_3(R_{p_j}) \,\,\text{ for all } j>N_1.
  \end{equation}
    Now, to establish the claim for this case, we need to find a constant $w_1$ such that $0<w_1<Z_1,$ and a matrix $\widetilde{R}^{(p_j)}_2=A-\widetilde{B}^{(p_j)}_2$ such that   $\|\widetilde{R}^{(p_j)}_2\|_{(p_j,2)}\leq\|R_{p_j}\|_{(p_j,2)}$ and
  \begin{equation}\label{contra_eqn2} 
  	\sigma_3(\widetilde{R}^{(p_j)}_2)<w_1<Z_1.
  \end{equation} 
  We show that such an $\widetilde{R}^{(p_j)}_2$ cannot be equal to $R^{({p_j},2)}_{\min}.$ Suppose they are equal.
  Note that by Theorem \ref{uniq_thrm_onedim1}, the set $\W_{p_j}^{(2)}$ is a singleton. So, let $\W_{p_j}^{(2)}=\{R^{({p_j},2)}\}$. 
  Then \begin{equation}\label{contra_eq3}
  	\widetilde{R}^{(p_j)}_2=R^{({p_j},2)}_{\min}=R^{({p_j},2)}.
  \end{equation}
  Further, by the definitions of $R^{(p_j,2)}$ and $R_{p_j},$ we have
  \begin{equation}\label{pk_inequality1}
  	\sigma_1^{p_j}(R^{(p_j,2)})+\sigma_2^{p_j}(R^{(p_j,2)})\leq \sigma_1^{p_j}(R_{p_j})+\sigma_2^{p_j}(R_{p_j}).
  \end{equation}  
  and
  \begin{equation}\label{Rp_inequality2}
  	\sigma_1^{p_j}(R_{p_j})+\sigma_2^{p_j}(R_{p_j})+\sigma_3^{p_j}(R_{p_j})\leq \sigma_1^{p_j}(R^{(p_j,2)})+\sigma_2^{p_j}(R^{(p_j,2)})+\sigma_3^{p_j}(R^{(p_j,2)}).
  \end{equation}
  Thus,  \eqref{pk_inequality1} and \eqref{Rp_inequality2} together imply that \begin{equation}\label{contra_eq4}
  	\sigma_3(R_{p_j})\leq \sigma_3(R^{(p_j,2)}).
  \end{equation}
  Now inequalities \eqref{contra_eqn1}, \eqref{contra_eqn2}, \eqref{contra_eq3} and \eqref{contra_eq4} give
  $$\sigma_3(R^{(p_j,2)})<w_1<Z_1<\sigma_3(R_{p_j})\leq\sigma_3(R^{(p_j,2)})$$ which is a contradiction. 
  \end{remark}
  We recall the following proposition, which will be useful for further discussion.  
  \begin{pro}\cite[Corollary 1]{tr}\label{SubdiffbestF1}
  	Let $Y$ be solution of \eqref{kp_approx_def11} with respect to the unitarily invariant norm $|\!|\!|\cdot|\!|\!|$ if and only if there exists a $F\in \Mc^{\perp}$ such that $$F\in\partial|\!|\!|A-Y|\!|\!|.$$
  \end{pro}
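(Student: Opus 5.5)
This is the standard optimality criterion for best approximation from a subspace, stated for a general unitarily invariant norm. I will obtain it from the subdifferential calculus already recalled in Section~\ref{sec2}, following the pattern of the proof of Theorem~\ref{subspace_direct1}.

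\textbf{Reformulating as minimisation at zero.} Let $S_1:\Mc\to\M_{m\times n}(\C)$ be the inclusion map. Define the affine map $L(B)=A-Y-S_1(B)$ on $\Mc$, and set $F(X)=|\!|\!|X|\!|\!|$, which is continuous and convex. Since $Y\in\Mc$ and $\Mc$ is a subspace, $Y$ solves \eqref{kp_approx_def11} exactly when $F\circ L$ attains its minimum on $\Mc$ at $B=0$. Indeed, $\{Y+B:B\in\Mc\}=\Mc$.

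\textbf{Applying the subdifferential calculus.} By Proposition~\ref{basic_minma_subd}, the minimum at $0$ holds if and only if $0\in\partial(F\circ L)(0)$. Proposition~\ref{subdif_affine} is applied with linear part $-S_1$. This gives
\[
\partial(F\circ L)(0)=-S_1^*\,\partial|\!|\!|A-Y|\!|\!|,
\]
where $S_1^*$ is the (complex) adjoint of the inclusion. As noted in the proof of Theorem~\ref{subspace_direct1}, $S_1^*$ is the orthogonal projection of $\M_{m\times n}(\C)$ onto $\Mc$ with respect to the inner product $\Re\,\tr(X^*Y)$ (or the complex one; both have the same kernel). Hence $0\in\partial(F\circ L)(0)$ if and only if some $F\in\partial|\!|\!|A-Y|\!|\!|$ has zero projection onto $\Mc$, that is, $F\in\Mc^\perp$. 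This proves both directions at once.

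\textbf{Direct check of sufficiency and the main obstacle.} Sufficiency can also be verified directly, avoiding any subtlety about adjoints. Take $F\in\Mc^\perp\cap\partial|\!|\!|A-Y|\!|\!|$ and any $X\in\Mc$. Using \eqref{eq 1.4.1}, and noting $Y-X\in\Mc$,
\[
|\!|\!|A-X|\!|\!|\ \ge\ \Re\,\tr(F^*(A-X))=\Re\,\tr(F^*(A-Y))=|\!|\!|A-Y|\!|\!|.
\]
The only point needing care is necessity. There one must identify the real-linear adjoint of the inclusion when $\Mc$ is viewed as a real or complex space. For a complex subspace, $\Re\,\tr(F^*B)=0$ for all $B\in\Mc$ already forces $\tr(F^*B)=0$ (replace $B$ by $iB$). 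So the two notions of $\Mc^\perp$ agree, and the argument above goes through.
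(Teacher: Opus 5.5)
Your proof is correct and matches the paper: the paper gives no separate proof of this proposition (it only cites \cite[Corollary 1]{tr}), but your necessity argument is the argument of Theorem~\ref{subspace_direct1} (inclusion map, Propositions~\ref{basic_minma_subd} and~\ref{subdif_affine}, and $S_1^*$ as the orthogonal projection onto $\Mc$), and your sufficiency check is the final dual-norm step in the proof of its converse. Unitary invariance is never used, so you have proved the characterization for an arbitrary norm on $\M_{m\times n}(\C)$; when $\Mc$ is only a real subspace, $\Mc^\perp$ must be taken with respect to $\Re\,\tr(X^*Y)$.
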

 The following theorem gives a characterization about the best approximations in the Ky Fan $p$-$k$ norm in terms of their singular values.
  
  \begin{theorem}\label{kthsingulavaleRpk1}
	Let $2\leq p<\infty,$ and $1\leq k\leq n_0.$ Suppose there is an  $R_0^{(p,k)}\in \W_p^{(k)}$ such that $\sigma_k(R_0^{(p,k)})>\sigma_{k+1}(R_0^{(p,k)}).$
	Then for every $ R^{(p,k)}\in \W_p^{(k)} $ and every $ 1\leq i\leq k$,
	$$ \sigma_i({R_0^{(p,k)}})=\sigma_i({R^{(p,k)}}).$$
\end{theorem}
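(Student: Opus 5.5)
The plan is to use the singleton-subdifferential observation of Remark \ref{remark_ext} at $R_0:=R_0^{(p,k)}$. This produces a single dual element $G\in\Mc^\perp$ that every other residual $R=R^{(p,k)}$ must also norm. Tracing the equality cases in the chain of inequalities from the proof of the converse theorem should then force $RV=R_0V$ for a fixed set of top-$k$ right singular vectors $V$ of $R_0$. Cauchy interlacing finishes the argument.

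\emph{Step 1 (a common dual element).} Since $\sigma_k(R_0)>\sigma_{k+1}(R_0)$, Remark \ref{remark_ext}(1) says that $\partial\|R_0\|_{(p,k)}$ is the singleton
\[
G=\frac{1}{\|R_0\|_{(p,k)}^{p-1}}\,R_0(R_0^*R_0)^{\frac{p-2}{2}}VV^*,
\]
where $V=[v_1,\ldots,v_k]$ is orthonormal with $R_0^*R_0v_i=\sigma_i^2(R_0)v_i$. By Proposition \ref{SubdiffbestF1}, $\partial\|R_0\|_{(p,k)}$ meets $\Mc^\perp$, so $G\in\Mc^\perp$. Now let $R\in\W_p^{(k)}$ be arbitrary. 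Then $R-R_0\in\Mc$ and $\|R\|_{(p,k)}=\|R_0\|_{(p,k)}$, and therefore
\[
\Re\,\tr(R^*G)=\Re\,\tr(R_0^*G)=\|R_0\|_{(p,k)}=\|R\|_{(p,k)}.
\]

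\emph{Step 2 (equality in the chain).} I rerun, with $X=R$, the estimate from the proof of the converse theorem:
\[
\Re\,\tr(R^*G)=\frac{1}{\|R_0\|_{(p,k)}^{p-1}}\sum_{i=1}^k\sigma_i^{p-2}(R_0)\Re\langle R_0v_i,Rv_i\rangle
\le\frac{1}{\|R_0\|_{(p,k)}^{p-1}}\sum_{i=1}^k\sigma_i^{p-1}(R_0)\|Rv_i\|
\le\Big(\sum_{i=1}^k\|Rv_i\|^p\Big)^{1/p}\le\|R\|_{(p,k)} .
\]
By Step 1, every inequality in this chain is an equality. I then read off the equality cases one at a time.
\begin{itemize}
\item Equality in Hölder's inequality gives $\|Rv_i\|=c\,\sigma_i(R_0)$ for a constant $c\ge 0$.
\item The outer equality $\big(\sum_i\|Rv_i\|^p\big)^{1/p}=\|R\|_{(p,k)}=\|R_0\|_{(p,k)}$ then forces $c=1$.
\item Equality in Cauchy--Schwarz, together with positivity of the real part, makes $Rv_i$ a nonnegative multiple of $R_0v_i$ with the same norm. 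This uses $\sigma_i(R_0)>0$; the indices with $\sigma_i(R_0)=0$ need a small separate check, and there $\|Rv_i\|=0$ anyway.
\end{itemize}
The conclusion is $RV=R_0V$, and hence
\[
V^*R^*RV=V^*R_0^*R_0V=\operatorname{diag}\big(\sigma_1^2(R_0),\ldots,\sigma_k^2(R_0)\big).
\]

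\emph{Step 3 (interlacing).} $V^*R^*RV$ is a $k\times k$ compression of $R^*R$. By Cauchy interlacing its $i$-th largest eigenvalue is at most $\lambda_i(R^*R)$, which gives $\sigma_i(R_0)\le\sigma_i(R)$ for $1\le i\le k$. On the other hand,
\[
\sum_{i=1}^k\sigma_i^p(R)=\|R\|_{(p,k)}^p=\|R_0\|_{(p,k)}^p=\sum_{i=1}^k\sigma_i^p(R_0).
\]
Termwise inequalities with equal sums force $\sigma_i(R)=\sigma_i(R_0)$ for every $1\le i\le k$, which is the claim.

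The main obstacle is the equality bookkeeping in Step 2. One has to justify that all inequalities become equalities simultaneously, and handle the degenerate case where some $\sigma_i(R_0)=0$ (or $R_0=0$, which is trivial). Once $RV=R_0V$ is in hand, the interlacing argument in Step 3 is routine, and it works uniformly for all $p\ge2$, including $p=2$.
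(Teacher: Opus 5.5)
Your proof is correct, but after Step 1 it takes a different route from the paper.

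\textbf{The paper's route.} The paper also starts from the singleton element $F=G\in\Mc^\perp$. It then observes that $F\in\partial\|R\|_{(p,k)}$ and writes $F$ as a convex combination of the extreme points of $\partial\|R\|_{(p,k)}$ given by Theorem~\ref{subdif_kyFan KP}. All of these extreme points, and $F$ itself, have unit $c_q$ norm, where $q=p/(p-1)$. Strict convexity of the $c_q$ norm then forces $F=\|R\|_{(p,k)}^{1-p}R(R^*R)^{\frac{p-2}{2}}\sum_i x_ix_i^*$. Comparing the singular values of the two expressions for $F$ gives $\sigma_i^{p-1}(R)=\sigma_i^{p-1}(R_0)$.

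\textbf{Your route.} You never use $\partial\|R\|_{(p,k)}$ or strict convexity of $c_q$. You read off the equality cases of Cauchy--Schwarz, H\"older and the bound $(\sum_i\|Xv_i\|^p)^{1/p}\le\|X\|_{(p,k)}$. This yields the stronger structural fact $RV=R_0V$, and interlacing finishes the proof. The step you flag as the main obstacle is not one: the chain starts and ends at $\|R\|_{(p,k)}$, so every inequality in it is automatically an equality. Your bullets handle the equality cases correctly, including the indices with $\sigma_i(R_0)=0$.

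\textbf{What each approach buys.} Given Theorem~\ref{subdif_kyFan KP}, the paper's argument is shorter. However, it genuinely needs $\partial\|R_0\|_{(p,k)}$ to be a singleton: otherwise the element of $\Mc^\perp$ could be a nontrivial convex combination of extreme points with $c_q$ norm below $1$. Your argument uses the gap only to make $G$ a single extreme point, and it survives without the gap. Suppose $F=\sum_j\lambda_jK_j\in\Mc^\perp\cap\partial\|R_0\|_{(p,k)}$ with $\lambda_j>0$ and the $K_j$ as in Remark~\ref{remark_ext}. Then $\Re\,\tr(R^*K_j)\le\|R\|_{(p,k)}$ for each $j$, while the convex combination equals $\|R\|_{(p,k)}$. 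So every $K_j$ satisfies your Step~2, and Step~3 goes through unchanged. Your route therefore shows that the top $k$ singular values are common to all elements of $\W_p^{(k)}$ for every $2\le p<\infty$, without assuming $\sigma_k(R_0)>\sigma_{k+1}(R_0)$.
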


\begin{proof}
	Let $v_1,\ldots,v_k$ be orthonormal vectors such that $${R_0^{(p,k)}}^*R_0^{(p,k)}v_i=\sigma_i^2(R_0^{(p,k)})v_i\quad\text{ for all } 1 \leq i \leq k.$$ Since $\sigma_k(R_0^{(p,k)})>\sigma_{k+1}(R_0^{(p,k)}),$  we get
	$$ \partial \|R_0^{(p,k)}\|_{(p,k)}=\left\{ \frac{1}{\|R_0^{(p,k)}\|_{(p,k)}^{p-1}}  {R_0^{(p,k)}}\left({R_0^{(p,k)}}^*R_0^{(p,k)}\right)^\frac{p-2}{2}\sum_{i=1}^{k}v_iv_i^*\right\}.$$  Since $A-R_0^{(p,k)}$ is a best approximant of $A,$ by Proposition \ref{SubdiffbestF1}, there exists $F\in \Mc^\perp$ such that $F\in  \partial \|R_0^{(p,k)}\|_{(p,k)}.$ Thus \begin{equation}\label{F1equality} F=\frac{1}{\|R_0^{(p,k)}\|_{(p,k)}^{p-1}}  {R_0^{(p,k)}}\left({R_0^{(p,k)}}^*R_0^{(p,k)}\right)^\frac{p-2}{2}\sum_{i=1}^{k}v_iv_i^*.
	\end{equation} 
	Let $R^{(p,k)}\in \W_p^{(k)}.$ Then
	$F$ is also in $ \partial \|R^{(p,k)}\|_{(p,k)}.$ By Theorem \ref{subdif_kyFan KP}, there exist positive numbers $\lambda_1, \ldots, \lambda_m$ with $\sum_{j=1}^m \lambda_j=1$ and for each $1\leq j\leq m$, there exist $k$  orthonormal vectors $x_1^{(j)},x_2^{(j)},\ldots,x_k^{(j)}$ such that ${R^{(p,k)}}^*R^{(p,k)} x_i^{(j)}=\sigma_i^2(R^{(p,k)}) x_i^{(j)}$ for all $1\leq i\leq k$ and
	$$ F=\sum_{j=1}^{m}\lambda_j\left(\, \frac{1}{\|R^{(p,k)}\|_{(p,k)}^{p-1}}{R^{(p,k)}}\left({R^{(p,k)}}^*R^{(p,k)}\right)^\frac{p-2}{2}\sum_{i=1}^{k}x_i^{(j)}{x_i^{(j)}}^*\right).$$ 
	 For $q=\frac{p}{p-1},$ the conjugate index of $p$, we have $$ \left\|\, \frac{1}{\|R^{(p,k)}\|_{(p,k)}^{p-1}}{R^{(p,k)}}\left({R_1^{(p,k)}}^*R_1^{(p,k)}\right)^\frac{p-2}{2}\sum_{i=1}^{k}x_i^{(j)}{x_i^{(j)}}^*\right\|_q=1=\|F\|_q\, \text{ for all }j=1,\ldots,m_1.$$ For $1< q\leq 2,$  $c_q$ norm is strictly convex. So 
	\begin{equation} \label{F2equality}
		F= \, \frac{1}{\|R_1^{(p,k)}\|_{(p,k)}^{p-1}}{R_1^{(p,k)}}\left({R_1^{(p,k)}}^*R_1^{(p,k)}\right)^\frac{p-2}{2}\sum_{i=1}^{k}x_i^{(j)}{x_i^{(j)}}^* \text{ for all }j=1,\ldots,m_1.
	\end{equation}
	From the equality of $F$ given in \eqref{F1equality} and \eqref{F2equality}, we get the required result.

\end{proof}

\section*{Acknowledgments}
   The authors are grateful to the referee for a careful reading of the manuscript and for the suggestions that improved its presentation. The authors are thankful to K. Zi\k{e}tak for bringing her paper to our attention and for introducing several open problems. We also sincerely appreciate the valuable discussions we had with her. The author P. Grover is supported by a grant CRG/ $2023/000595$ funded by the Anusandhan National Research Foundation (ANRF), India.
	 	\bibliographystyle{abbrv}
	\bibliography{M2Final_ref1}

@article{ACCF,
author = {Altwaijry, N. and  Chmieli\'nski, J. and  Conde, C. and Feki, K.},
title = {Approximate orthogonality and its applications to specific classes of linear operators},
journal = {Bulletin des Sciences Mathématiques},
volume = {202},
year = {2025},
pages = {103645},
issn = {0007-4497},
doi = {https://doi.org/10.1016/j.bulsci.2025.103645},
url = {https://www.sciencedirect.com/science/article/pii/S0007449725000715},
}

@article{wojcik22,
title = {Approximate orthogonality in normed spaces and its applications II},
journal = {Linear Algebra and its Applications},
volume = {632},
pages = {258-267},
year = {2022},
issn = {0024-3795},
doi = {https://doi.org/10.1016/j.laa.2021.09.011},
url = {https://www.sciencedirect.com/science/article/pii/S002437952100344X},
author = {W\'ojcik, P.},}

@book{hiriartREalfundamentals,
title={Fundamentals of Convex Analysis},
author={Hiriart-Urruty, J. B. and Lemar{\'e}chal, C.},
publisher={Springer},
year         = {2002}
}

@book{zalinescuGeneralconvex,
 author      = {Z{\u{a}}linescu, C.},
title        = {Convex Analysis in General Vector Spaces},
publisher    = {World Scientific},
address      = {Singapore},
year         = {2002}
}

@book{bhatia2013matrix,
title       = {Matrix Analysis},
author      = {Bhatia, R.},
publisher   = {Springer-Verlag},
series      = {Graduate Texts in Mathematics},
volume      = {169},
year        = {1997}
}

@incollection{surveyZ,
	AUTHOR = { Zi\k{e}tak, K.},
	TITLE = {From the strict {C}hebyshev approximant of a vector to the
	strict spectral approximant of a matrix},
	BOOKTITLE = {\'{E}tudes op\'{e}ratorielles},
	SERIES = {Banach Center Publ.},
	VOLUME = {112},
	PAGES = {307--346},
	PUBLISHER = {Polish Acad. Sci. Inst. Math., Warsaw},
	YEAR = {2017},
	ISBN = {978-83-86806-36-2},
	MRCLASS = {15A60 (47A58 65F99)},
	MRNUMBER = {3754084},
	MRREVIEWER = {Adolf\ Rhodius},
}

@book{marshall1979inequalities,
AUTHOR = {Marshall, Albert W. and Olkin, Ingram and Arnold, Barry C.},
TITLE = {Inequalities: theory of majorization and its applications},
SERIES = {Springer Series in Statistics},
EDITION = {Second},
PUBLISHER = {Springer, New York},
YEAR = {2011},
PAGES = {xxviii+909},
ISBN = {978-0-387-40087-7},
MRCLASS = {26-02 (05-02 26D15 26D20 60E15)},
MRNUMBER = {2759813},
DOI = {10.1007/978-0-387-68276-1},
URL = {https://doi.org/10.1007/978-0-387-68276-1},
}

@article {SSA,
	AUTHOR = {Zi\k{e}tak, K.},
	TITLE = {Strict approximation of matrices},
	JOURNAL = {SIAM J. Matrix Anal. Appl.},
	FJOURNAL = {SIAM Journal on Matrix Analysis and Applications},
	VOLUME = {16},
	YEAR = {1995},
	NUMBER = {1},
	PAGES = {232--234},
	ISSN = {0895-4798},
	MRCLASS = {41A30 (15A99)},
	MRNUMBER = {1311429},
	MRREVIEWER = {George\ A.\ Anastassiou},
	DOI = {10.1137/S089547989224138X},
	URL = {https://doi.org/10.1137/S089547989224138X},
}

@article{lischneider,
AUTHOR = {Li, Chi-Kwong and Schneider, Hans},
TITLE = {Orthogonality of matrices},
JOURNAL = {Linear Algebra Appl.},
FJOURNAL = {Linear Algebra and its Applications},
VOLUME = {347},
YEAR = {2002},
PAGES = {115--122},
ISSN = {0024-3795,1873-1856},
MRCLASS = {15A60 (46B99)},
MRNUMBER = {1899885},
MRREVIEWER = {Peter\ \v Semrl},
DOI = {10.1016/S0024-3795(01)00530-4},
URL = {https://doi.org/10.1016/S0024-3795(01)00530-4},
}

@article{bhatia1999orthogonality,
	title={Orthogonality of matrices and some distance problems},
	author={Bhatia, R. and Semrl, P.},
	journal={Linear algebra and its applications},
	volume={287},
	number={1-3},
	pages={77--85},
	year={1999},
	publisher={Elsevier}
}

@article {tr,
	AUTHOR = {Zi\k{e}tak, K.},
	TITLE = {On approximation problems with zero-trace matrices},
	JOURNAL = {Linear Algebra Appl.},
	FJOURNAL = {Linear Algebra and its Applications},
	VOLUME = {247},
	YEAR = {1996},
	PAGES = {169--183},
	ISSN = {0024-3795,1873-1856},
	MRCLASS = {15A60 (41A30)},
	MRNUMBER = {1412747},
	MRREVIEWER = {I.\ Gavrea},
	DOI = {10.1016/0024-3795(95)00098-4},
	URL = {https://doi.org/10.1016/0024-3795(95)00098-4},
}

@article{watson1linear9k9major4lp,
	title={Linear best approximation using a class of k-major lp norms},
	author={Watson, G. A.},
	journal={Numerical Algorithms},
	volume={8},
	pages={135--146},
	year={1994},
	publisher={Springer},
}

@article {epsilon_app,
	AUTHOR = {Chmieli\'nski, J.},
	TITLE = {On an {$\epsilon$}-{B}irkhoff orthogonality},
	JOURNAL = {JIPAM. J. Inequal. Pure Appl. Math.},
	FJOURNAL = {JIPAM. Journal of Inequalities in Pure and Applied
	Mathematics},
	VOLUME = {6},
	YEAR = {2005},
	NUMBER = {3},
	PAGES = {Article 79, 7},
	ISSN = {1443-5756},
	MRCLASS = {46B20 (46C50)},
	MRNUMBER = {2164320},
	MRREVIEWER = {Carlos Ben\'{\i}tez},
}

@article {KyFan_ortho_Grover,
	AUTHOR = {Grover, P.},
	TITLE = {Orthogonality of matrices in the {K}y {F}an {$k$}-norms},
	JOURNAL = {Linear Multilinear Algebra},
	FJOURNAL = {Linear and Multilinear Algebra},
	VOLUME = {65},
	YEAR = {2017},
	NUMBER = {3},
	PAGES = {496--509},
	ISSN = {0308-1087},
	MRCLASS = {15A60 (15A18 47A12 47A30)},
	MRNUMBER = {3589614},
	MRREVIEWER = {Brian Simanek},
	DOI = {10.1080/03081087.2016.1193118},
	URL = {https://doi.org/10.1080/03081087.2016.1193118},
}

@article {watson_subdiff_cha,
	AUTHOR = {Watson, G. A.},
	TITLE = {Characterization of the subdifferential of some matrix norms},
	JOURNAL = {Linear Algebra Appl.},
	FJOURNAL = {Linear Algebra and its Applications},
	VOLUME = {170},
	YEAR = {1992},
	PAGES = {33--45},
	ISSN = {0024-3795,1873-1856},
	MRCLASS = {15A60},
	MRNUMBER = {1160950},
	MRREVIEWER = {Jorma\ Kaarlo\ Merikoski},
	DOI = {10.1016/0024-3795(92)90407-2},
	URL = {https://doi.org/10.1016/0024-3795(92)90407-2},
}

@article {Bhatt_Priyanka,
	AUTHOR = {Bhattacharyya, Tirthankar and Grover, Priyanka},
	TITLE = {Characterization of {B}irkhoff-{J}ames orthogonality},
	JOURNAL = {J. Math. Anal. Appl.},
	FJOURNAL = {Journal of Mathematical Analysis and Applications},
	VOLUME = {407},
	YEAR = {2013},
	NUMBER = {2},
	PAGES = {350--358},
	ISSN = {0022-247X},
	MRCLASS = {46B20 (46L05 46L08)},
	MRNUMBER = {3071106},
	MRREVIEWER = {\"{O}mer G\"{o}k},
	DOI = {10.1016/j.jmaa.2013.05.022},
	URL = {https://doi.org/10.1016/j.jmaa.2013.05.022},
}

@article {Birkhoff1935,
	AUTHOR = {Birkhoff, G.},
	TITLE = {Orthogonality in linear metric spaces},
	JOURNAL = {Duke Math. J.},
	FJOURNAL = {Duke Mathematical Journal},
	VOLUME = {1},
	YEAR = {1935},
	NUMBER = {2},
	PAGES = {169--172},
	ISSN = {0012-7094},
	MRCLASS = {DML},
	MRNUMBER = {1545873},
	DOI = {10.1215/S0012-7094-35-00115-6},
	URL = {https://doi.org/10.1215/S0012-7094-35-00115-6},
}

@article {BOTtaziMinimal_com,
	AUTHOR = {Bottazzi, T. and Varela, A.},
	TITLE = {Minimal compact operators, subdifferential of the maximum
	eigenvalue and semi-definite programming},
	JOURNAL = {Linear Algebra Appl.},
	FJOURNAL = {Linear Algebra and its Applications},
	VOLUME = {716},
	YEAR = {2025},
	PAGES = {1--31},
	ISSN = {0024-3795},
	MRCLASS = {47B15 (15A60 47A05 47A12 47A30 51M15)},
	MRNUMBER = {4886465},
	DOI = {10.1016/j.laa.2025.03.017},
	URL = {https://doi.org/10.1016/j.laa.2025.03.017},
}

@article {onapproxSSDrag,
AUTHOR = {Dragomir, Sever Silvestru},
TITLE = {On approximation of continuous linear functionals in normed
linear spaces},
JOURNAL = {An. Univ. Timi\c soara Ser. \c Stiin\c t. Mat.},
FJOURNAL = {Analele Universit\u a\c tii din Timi\c soara. Seria \c Stiin\c
te Matematice},
VOLUME = {29},
YEAR = {1991},
NUMBER = {1},
PAGES = {51--58},
MRCLASS = {46B20 (46C50)},
MRNUMBER = {1336199},
}

@book {PriGro_thesis,
	AUTHOR = {Grover, Priyanka},
	TITLE = {Some {P}roblems in {D}ifferential and {S}ubdifferential
	{C}alculus of {M}atrices},
	NOTE = {Thesis (Ph.D.)--Indian Statistical Institute - Kolkata},
	PUBLISHER = {ProQuest LLC, Ann Arbor, MI},
	YEAR = {2014},
	PAGES = {119},
	ISBN = {979-8496-58155-4},
	MRCLASS = {Thesis},
	MRNUMBER = {4380514},
	URL =
	{http://gateway.proquest.com/openurl?url_ver=Z39.88-2004&rft_val_fmt=info:ofi/fmt:kev:mtx:dissertation&res_dat=xri:pqm&rft_dat=xri:pqdiss:28843053},
}

@article {PGmatrixsubspace,
	AUTHOR = {Grover, Priyanka},
	TITLE = {Orthogonality to matrix subspaces, and a distance formula},
	JOURNAL = {Linear Algebra Appl.},
	FJOURNAL = {Linear Algebra and its Applications},
	VOLUME = {445},
	YEAR = {2014},
	PAGES = {280--288},
	ISSN = {0024-3795},
	MRCLASS = {15A60 (46L05 47A06)},
	MRNUMBER = {3151274},
	MRREVIEWER = {Mohammad Sal Moslehian},
	DOI = {10.1016/j.laa.2013.11.040},
	URL = {https://doi.org/10.1016/j.laa.2013.11.040},
}

@article {RCJames_ortho,
	AUTHOR = {James, R. C.},
	TITLE = {Orthogonality and linear functionals in normed linear spaces},
	JOURNAL = {Trans. Amer. Math. Soc.},
	FJOURNAL = {Transactions of the American Mathematical Society},
	VOLUME = {61},
	YEAR = {1947},
	PAGES = {265--292},
	ISSN = {0002-9947},
	MRCLASS = {46.0X},
	MRNUMBER = {21241},
	MRREVIEWER = {R. S. Phillips},
	DOI = {10.2307/1990220},
	URL = {https://doi.org/10.2307/1990220},
}

@article {ArDeKpOnsomegeometric,
	AUTHOR = {Mal, Arpita and Sain, Debmalya and Paul, Kallol},
	TITLE = {On some geometric properties of operator spaces},
	JOURNAL = {Banach J. Math. Anal.},
	FJOURNAL = {Banach Journal of Mathematical Analysis},
	VOLUME = {13},
	YEAR = {2019},
	NUMBER = {1},
	PAGES = {174--191},
	ISSN = {2662-2033},
	MRCLASS = {46B20 (47L05)},
	MRNUMBER = {3892339},
	MRREVIEWER = {Damian Marcin Kubiak},
	DOI = {10.1215/17358787-2018-0021},
	URL = {https://doi.org/10.1215/17358787-2018-0021},
}

@article{Watson1993KyFank,
	AUTHOR = {Watson, G. A.},
	TITLE = {On matrix approximation problems with {K}y {F}an {$k$} norms},
	JOURNAL = {Numer. Algorithms},
	FJOURNAL = {Numerical Algorithms},
	VOLUME = {5},
	YEAR = {1993},
	PAGES = {263--272},
	ISSN = {1017-1398,1572-9265},
	MRCLASS = {65D15 (15A60)},
	MRNUMBER = {1258599},
	DOI = {10.1007/BF02210386},
	URL = {https://doi.org/10.1007/BF02210386},
}

@article {RankoneSeddik,
	AUTHOR = {Seddik, Ameur},
	TITLE = {Rank one operators and norm of elementary operators},
	JOURNAL = {Linear Algebra Appl.},
	FJOURNAL = {Linear Algebra and its Applications},
	VOLUME = {424},
	YEAR = {2007},
	NUMBER = {1},
	PAGES = {177--183},
	ISSN = {0024-3795},
	MRCLASS = {47B47 (47A30)},
	MRNUMBER = {2324383},
	MRREVIEWER = {Borut Zalar},
	DOI = {10.1016/j.laa.2006.10.003},
	URL = {https://doi.org/10.1016/j.laa.2006.10.003},
}

@article {Zietaksubdual1993,
	AUTHOR = {Zi\k{e}tak, K.},
	TITLE = {Subdifferentials, faces, and dual matrices},
	JOURNAL = {Linear Algebra Appl.},
	FJOURNAL = {Linear Algebra and its Applications},
	VOLUME = {185},
	YEAR = {1993},
	PAGES = {125--141},
	ISSN = {0024-3795,1873-1856},
	MRCLASS = {15A60},
	MRNUMBER = {1213175},
	MRREVIEWER = {Roy\ Mathias},
	DOI = {10.1016/0024-3795(93)90209-7},
	URL = {https://doi.org/10.1016/0024-3795(93)90209-7},
}

@article {Prop_spectrZietak1993,
	AUTHOR = {Zi\k{e}tak, K.},
	TITLE = {Properties of linear approximations of matrices in the
	spectral norm},
	JOURNAL = {Linear Algebra Appl.},
	FJOURNAL = {Linear Algebra and its Applications},
	VOLUME = {183},
	YEAR = {1993},
	PAGES = {41--60},
	ISSN = {0024-3795,1873-1856},
	MRCLASS = {15A60},
	MRNUMBER = {1208196},
	MRREVIEWER = {Makoto\ Takaguchi},
	DOI = {10.1016/0024-3795(93)90423-L},
	URL = {https://doi.org/10.1016/0024-3795(93)90423-L},
}

@article {Liesen_Tichy2009,
	AUTHOR = {Liesen, J. and Tich\'y, P.},
	TITLE = {On best approximations of polynomials in matrices in the
	matrix 2-norm},
	JOURNAL = {SIAM J. Matrix Anal. Appl.},
	FJOURNAL = {SIAM Journal on Matrix Analysis and Applications},
	VOLUME = {31},
	YEAR = {2009},
	NUMBER = {2},
	PAGES = {853--863},
	ISSN = {0895-4798,1095-7162},
	MRCLASS = {41A52 (15A60 41A50 65F35)},
	MRNUMBER = {2530280},
	MRREVIEWER = {Fabio\ Di Benedetto},
	DOI = {10.1137/080728299},
	URL = {https://doi.org/10.1137/080728299},
}

@article{zietakextremal,
	author = {Zi\k{e}tak, K.},
	title = {On the characterization of the extremal points of the unit sphere of matrices},
	journal = {Linear Algebra Appl.},
	volume = {106},
	year = {1988},
	pages = {57--75},
	issn = {0024-3795},
	mrclass = {15A60},
	mrnumber = {951827},
	mrreviewer = {Shao Kuan Li},
	doi = {10.1016/0024-3795(88)90023-7},
	url = {https://doi.org/10.1016/0024-3795(88)90023-7}
}

@article {Legg_wardtrace_1985,
	AUTHOR = {Legg, D. A. and Ward, J. D.},
	TITLE = {A canonical trace class approximant},
	JOURNAL = {Proc. Amer. Math. Soc.},
	FJOURNAL = {Proceedings of the American Mathematical Society},
	VOLUME = {93},
	YEAR = {1985},
	NUMBER = {4},
	PAGES = {653--656},
	ISSN = {0002-9939,1088-6826},
	MRCLASS = {47B10 (47A30)},
	MRNUMBER = {776197},
	MRREVIEWER = {Lawrence\ R.\ Williams},
	DOI = {10.2307/2045539},
	URL = {https://doi.org/10.2307/2045539},
}

@book{horn2012matrix,
	title={Matrix analysis},
	author={Horn, Roger A. and Johnson, Charles R.},
	publisher={Cambridge University Press},
	 year    = {1985}
}

@book{bhatia2009positive,
title       = {Matrix Analysis},
author      = {Bhatia, R.},
publisher   = {Princeton University Press},
series      = {Texts and Readings in Mathematics},
volume      = {44},
year        = {2007}
}

@book{singer2013best,
	author    = {I. Singer},
	title     = {Best Approximation in Normed Linear Spaces by Elements of Linear Subspaces},
	publisher = {Springer-Verlag},
	address   = {Berlin},
	year      = {1970}
}

@article {Zamani2016,
	AUTHOR = {Zamani, Ali and Moslehian, Mohammad Sal},
	TITLE = {Norm-parallelism in the geometry of {H}ilbert {$C^*$}-modules},
	JOURNAL = {Indag. Math. (N.S.)},
	FJOURNAL = {Koninklijke Nederlandse Akademie van Wetenschappen.
	Indagationes Mathematicae. New Series},
	VOLUME = {27},
	YEAR = {2016},
	NUMBER = {1},
	PAGES = {266--281},
	ISSN = {0019-3577,1872-6100},
	MRCLASS = {46L08},
	MRNUMBER = {3437749},
	MRREVIEWER = {Daniele\ Puglisi},
	DOI = {10.1016/j.indag.2015.10.008},
	URL = {https://doi.org/10.1016/j.indag.2015.10.008},
}

@article{ying,
	AUTHOR = {Zhang, Ying and Jiang, Lining and Han, Yongheng},
	TITLE = {Constructions of minimal {H}ermitian matrices related to a {$\mathrm{C}^*$}-subalgebra of {$M_n(\mathbb{C})$}},
	JOURNAL = {Proc. Amer. Math. Soc.},
	FJOURNAL = {Proceedings of the American Mathematical Society},
	VOLUME = {151},
	YEAR = {2023},
	NUMBER = {1},
	PAGES = {73--84},
	ISSN = {0002-9939,1088-6826},
	MRCLASS = {15A60 (15B57 46N10)},
	MRNUMBER = {4504608},
	MRREVIEWER = {Robert\ S.\ Doran},
	DOI = {10.1090/proc/16130},
	URL = {https://doi.org/10.1090/proc/16130},
}

@article {andruchow,
	AUTHOR = {Andruchow, Esteban and Mata-Lorenzo, Luis E. and Mendoza,
	Alberto and Recht, L\'azaro and Varela, Alejandro},
	TITLE = {Minimal matrices and the corresponding minimal curves on flag
	manifolds in low dimension},
	JOURNAL = {Linear Algebra Appl.},
	FJOURNAL = {Linear Algebra and its Applications},
	VOLUME = {430},
	YEAR = {2009},
	NUMBER = {8-9},
	PAGES = {1906--1928},
	ISSN = {0024-3795,1873-1856},
	MRCLASS = {46L51 (58B10 58B25)},
	MRNUMBER = {2503942},
	MRREVIEWER = {Daniel\ Belti\c t\u a},
	DOI = {10.1016/j.laa.2008.10.023},
	URL = {https://doi.org/10.1016/j.laa.2008.10.023},
}
\end{document}